\numberwithin{equation}{section}
\definecolor{my_color}{rgb}{0,0.5,0.5}
\definecolor{MIXT}{rgb}{0.4,0.3,0.6}
 \font\tencyr=wncyr10 
\font\tencyi=wncyi10 
\font\tencyb=wncyb10 
\font\tencysc=wncysc10 
\def\rus{\tencyr\cyracc}
\def\rusi{\tencyi\cyracc}
\def\rusc{\tencysc\cyracc}
\def\rusb{\tencyb\cyracc}
\newtheorem{thm}{Theorem}[section]
\newtheorem{lm}[thm]{Lemma}
\newtheorem{prop}[thm]{Proposition}
\theoremstyle{remark}
\newtheorem{rmk}[thm]{Remark}
\theoremstyle{definition}
\newtheorem{ex}[thm]{Example}
\newtheorem*{ex-bn}{Example}
\newtheorem*{rema}{Remark}
\newtheorem{quest}{Question}
\newenvironment{proof*}
{\noindent {\sl Proof.}\quad }{\hfill
$\square$}
\newcommand {\ah}{{\mathfrak a}}
\newcommand {\ee}{{\mathfrak e}}
\newcommand {\g}{{\mathfrak g}}
\newcommand {\h}{{\mathfrak h}}
\newcommand {\el}{{\mathfrak l}}
\newcommand {\n}{{\mathfrak n}}
\newcommand {\fN}{{\mathfrak N}}
\newcommand {\p}{{\mathfrak p}}
\newcommand {\es}{{\mathfrak s}}
\newcommand {\te}{{\mathfrak t}}
\newcommand {\ut}{{\mathfrak u}}
\newcommand {\z}{{\mathfrak z}}
\newcommand {\sln}{\mathfrak{sl}_n}
\newcommand {\glv}{\mathfrak{gl}(\VV)}
\newcommand {\slno}{\mathfrak{sl}_{n+1}}
\newcommand {\eus}{\EuScript}
\newcommand {\ap}{\alpha}
\newcommand {\lb}{\lambda}
\newcommand {\vp}{\varphi}
\newcommand {\ca}{{\mathcal A}}
\newcommand {\sfr}{{\eus R}}
\newcommand {\VV}{{\mathbb V}}
\newcommand {\UU}{{\mathbb U}}
\newcommand {\BZ}{{\mathbb Z}}
\newcommand {\BN}{{\mathbb N}}
\newcommand {\BQ}{{\mathbb Q}}
\newcommand {\BR}{{\mathbb R}}
\newcommand {\zge}{{\g^e}}
\newcommand {\zgen}{{\g^e_{nil}}}
\newcommand {\ad}{{\mathrm{ad\,}}}
\newcommand {\End}{{\mathsf{End\,}}}
\newcommand {\hot}{{\mathsf{ht}}}
\newcommand {\Lie}{{\mathsf{Lie}}}
\newcommand {\Ker}{{\mathrm{Ker\,}}}
\newcommand {\rk}{{\mathsf{rk}}}
\newcommand {\tri}{\mathfrak{sl}_2}
\newcommand {\GR}[2]{{\textrm{{\bf #1}}}_{#2}}
\newcommand {\un}{\underline}
\newcommand {\beq}{\begin{equation}}
\newcommand {\eeq}{\end{equation}}
\renewcommand{\le}{\leqslant}
\renewcommand{\ge}{\geqslant}
\newcommand{\ri}{wide}
\newcommand{\fxpl}{\mathfrak X_+}
\newcommand{\vlb}{\sfr(\lb)}
\newcommand {\bbk}
{\mathbb C}%
\begin{document}
\setlength{\parskip}{1pt plus 2pt minus 0pt}
\hfill {\scriptsize May 5, 2013}
\vskip1.5ex

\title[Wide subalgebras]
{Wide subalgebras of semisimple Lie algebras}
\author[D.\,Panyushev]{Dmitri I. Panyushev}
\address
{Institute for Information Transmission Problems of the R.A.S., 
\hfil\break\indent Dobrushin Mathematical Laboratory,
\hfil\break\indent B. Karetnyi per. 19, Moscow 
127994, Russia}
\email{panyushev@iitp.ru}
\subjclass[2010]{17B10, 17B70, 22E47}
\keywords{Simple Lie algebra, indecomposable representation, root system, parabolic subalgebra,  nilpotent element,
centraliser}

\maketitle

\section*{Introduction}
\noindent
Let  $G$ be a connected semisimple algebraic group over $\bbk$, with Lie algebra $\g$. Let $\h$ be a
subalgebra of $\g$.
A simple finite-dimensional $\g$-module $\VV$ is said to be $\h$-{\it indecomposable\/} if it
cannot be written as a direct sum of two proper $\h$-submodules. We say that $\h$ is {\it \ri\/}, 
if all simple finite-dimensional $\g$-modules are $\h$-indecomposable.
Some very special examples of indecomposable modules and \ri\ subalgebras appeared
recently in the literature, see ~\cite{casati,duglas} and references therein.
In this paper, we  point out several large classes of \ri\ subalgebras of $\g$ and 
initiate their systematic study. 

Our approach relies on the following simple  observation. 
Suppose that $\VV=V_1\oplus V_2$ is a
sum of two nontrivial $\h$-modules. Let $p:\VV\to V_1\subset \VV$ be the projection along $V_2$.
Then $p$ is a nontrivial idempotent in the associative algebra, $(\mathsf{End\,}\VV)^\h$, of $\h$-invariant elements in $\End\VV$.
Consequently, 
\[
\left\{\text{\parbox{4.3cm}{$\VV$ is  $\h$-indecomposable}}\right\} 
\quad \Longleftrightarrow \quad 
\left\{ \text{\parbox{4.7cm}{$(\mathsf{End\,}\VV)^\h$ does not contain non-trivial idempotents}}\right\} . 
\]

\noindent
The map $\mathsf{Id}_\VV:\VV\to \VV$ is the unit in the associative algebra $(\mathsf{End\,}\VV)^\h$, and we repeatedly use the following  sufficient condition for the absence of non-trivial idempotents in $(\mathsf{End\,}\VV)^\h$:
\vskip.7ex
\noindent
\hfil \parbox{.89\textwidth}{%
\sl Suppose that $(\mathsf{End\,}\VV)^\h=\bigoplus_{i\in\BN}(\mathsf{End\,}\VV)^\h(i)$ 
is graded (as associative algebra!) and
$(\mathsf{End\,}\VV)^\h(0)=\bbk{\cdot}\mathsf{Id}_\VV$. Then $(\mathsf{End\,}\VV)^\h$ does 
not contain non-trivial idempotents.
}\hfil
\vskip.8ex\noindent
We prove that such a grading exists for every simple $\g$-module $\VV$ if $\h$ belongs to the following list:

({\bf A}) \  
{\it  $\p\subset\g$ is a parabolic subalgebra that contains no simple ideals of\/ $\g$, and 
$\h$ is the nilradical of\/ $\p$; in particular, if\/ $\g$ is simple, then $\p$ can be any proper parabolic subalgebra}
(Section~\ref{sect:radical});

{\it ({\bf B}) \  $e\in\g$ is a nilpotent element that has a non-trivial projection to any simple ideal of\/ $\g$ and 
$\h$ is the nilradical of the centraliser of $e$; in particular, if\/ $\g$ is simple, then $e$ can be any 
nonzero nilpotent element} (Section~\ref{sect:reg-nilp});

({\bf C}) \ 
{\it $\h$ is a certain subalgebra that consists of nilpotent elements of\/ $\g$ (= \textsf{ad}-nilpotent 
subalgebra) and is normalised by a Cartan
subalgebra of\/ $\g$.} 
For a sensible description, we use the standard notation 
on root systems, see also \ref{notation} below.
Let $\te$ be a Cartan subalgebra of $\g$, $\Delta$ the root system of $(\g,\te)$, and $\g_\gamma$ 
the root space of $\g$ corresponding to $\gamma\in\Delta$. If $[\te,\h]\subset \h$ and $\h$ is 
\textsf{ad}-nilpotent, then $\h=\bigoplus_{\gamma\in\Delta_\h}\g_\gamma$, where $\Delta_\h$ is a 
closed subset of $\Delta$
and $\Delta_\h\cap (-\Delta_\h)=\varnothing$. The main result of Section~\ref{sect:pi-part} asserts that
$\h$ is \ri\ {\sl if and only if\/}  the closure of $\Delta_\h\cup (-\Delta_\h)$ is the whole root system $\Delta$.
 
(C$_1$)\ \ A special case of this construction is a subalgebra  
determined by a partition of a set of simple roots $\Pi$ in $\Delta$.
Let $\Pi'$ be a subset of $\Pi$.
Define $\h=\h(\Pi')$ to be the subalgebra of $\g$ generated by $\g_\ap$ ($\ap\in\Pi'$) and
$\g_{-\ap}$ ($\ap\in\Pi\setminus\Pi'$). We say that $\h(\Pi')$ is a  $\Pi$-{\it partition subalgebra} of $\g$.
Clearly, $\dim\h(\Pi')\ge \#\Pi$.
It is easily seen that $\h(\Pi')\simeq \h(\Pi\setminus\Pi')$ and
 $\h(\Pi')$ is \textsf{ad}-nilpotent. 
There is a special subset $\tilde\Pi\subset\Pi$ such that $\h(\tilde\Pi)$ is abelian
and $\dim\h(\tilde\Pi)=\#\Pi$. Namely, $\tilde\Pi$ is a set of pairwise orthogonal simple roots such
that $\Pi\setminus\tilde\Pi$ also consists of pairwise orthogonal roots. 
Since the Dynkin diagram is a tree, the partition $\Pi=\tilde\Pi\sqcup (\Pi\setminus \tilde\Pi)$ is 
unique, and in this case the vector space 
\[
    \bigl(\bigoplus_{\ap\in\tilde\Pi}\g_\ap\bigr)\oplus
    \bigl(\bigoplus_{\ap\in\Pi\setminus\tilde\Pi}\g_{-\ap}\bigr) \ \text{ or } \
    \bigl(\bigoplus_{\ap\in\tilde\Pi}\g_{-\ap}\bigr)\oplus
    \bigl(\bigoplus_{\ap\in\Pi\setminus\tilde\Pi}\g_{\ap}\bigr)
\]
is already an (abelian) subalgebra of dimension $\#\Pi$. It was proved in \cite{casati} that 
$\h(\tilde\Pi)$ is wide for $\g=\slno$.  Our proof is much easier and yields a more general
assertion.

(C$_2$)\ \ Another possibility is to take $\tilde\ut=[\ut^+,\ut^+]$, where $\ut^+$ is a maximal 
nilpotent subalgebra of $\g$. Here $\Delta_{\tilde\ut}=\Delta^+\setminus\Pi$, and 
the closure of $\Delta_{\tilde\ut}\cup (-\Delta_{\tilde\ut})$ equals $\Delta$ if and only if $\g$ 
has no simple ideals $\tri$ or $\mathfrak{sl}_3$.
Invariant-theoretic properties of $\tilde\ut$ have been studied in \cite{imrn10}.

In Section~\ref{sect:general}, we gather simple general properties of wide subalgebras and discuss
a relationship between wide subalgebras and epimorphic subgroups. 
A subgroup of $H\subset G$ is  {\it epimorphic\/} if the following condition holds: 
{\sl If\/ $\VV$ is a finite-dimen\-si\-onal rational $G$-module and\/ $\VV=V_1\oplus V_2$ is a direct sum 
of\/ $H$-modules, then the subspaces 
$V_1,V_2$ are actually $G$-invariant} (see \cite{bb92}). For a simple
$G$-module $\VV$, this is just the $H$-indecomposability condition. Therefore, if $H$ is epimorphic, then $\Lie(H)$ is wide. However, our work shows that there are much more wide subalgebras than Lie algebras of epimorphic subgroups. Indeed, epimorphic subgroups are also characterised by the property that
$\bbk[G]^H=\bbk$, hence they cannot be unipotent, whereas all
wide subalgebras described in (A),\,(B), and (C) are \textsf{ad}-nilpotent. 
We also give an example of a two-dimensional wide subalgebra of $\g$ and provide a quick 
derivation (and generalisation) for the results of \cite{duglas}.
\vskip1ex
{\small {\bf Acknowledgements.}
I am grateful to Alexander Premet (Manchester) for drawing my attention to the role of idempotent 
elements in the associative algebra $(\End\VV)^\h$.}

\section{Notation and other preliminaries}
\label{sect:notat-prelim}

\noindent 
\subsection{Notation}   \label{notation}
We fix a triangular decomposition
$\g=\ut^+\oplus\te\oplus\ut^-$ and various objects associated with the root system
$\Delta=\Delta(\g,\te)$. Specifically,

{\bf --} \quad $\Delta^+$ is the set of positive roots (= the roots of $\ut^+$);

{\bf --} \quad $\Pi=\{\ap_1,\dots,\ap_n\}$ is the set of simple roots in $\Delta^+$;

{\bf --} \quad $\{\vp_\ap \mid \ap\in\Pi\}$ are the fundamental weights and
$\fxpl$ is the set of dominant weights
corresponding to $\Pi$; 

{\bf --} \quad $\eus Q=\bigoplus_{i=1}^n \BZ\ap_i$ is the root lattice, $\eus E=\eus Q\otimes_\BZ\BR$, 
and $\eus P$ is the weight lattice in $\eus E$.

{\bf --} \quad $(\ ,\ )$ is a Weyl group invariant inner product in $\te$. Using this inner product, we identify
$\te$ and $\te^*$, and regard $\eus E$ as a real form of $\te$. \\
For any $\gamma\in\Delta$, let $\g_\gamma$ denote the corresponding root space. 
We also fix a nonzero element $e_\gamma\in\g_\gamma$. All $\g$-modules are assumed to be 
finite-dimensional. Write $\z_\g(M)$ or $\g^M$ for the centraliser of a subset $M\subset \g$.

\subsection{Rational semisimple elements and gradings}  \label{subs:h-grad}
Let $h\in\g$ be a rational semisimple element, i.e., the eigenvalues of $h$ in $\g$ are rational.
Then $h$ has rational eigenvalues in any finite-dimensional $\g$-module $\VV$. Therefore, 
\beq   \label{eq:h-grad}
   \VV=\bigoplus_{i\in\BQ} \VV_h(i) ,
\eeq
where $\VV_h(i)=\{ v\in \VV\mid \rho_{\VV}(h){\cdot}v=iv\}$ and $\rho_{\VV}:\g\to \End\VV=\glv$ is the representation.
We also say that \eqref{eq:h-grad} is the $h$-{\it grading\/} of $\VV$. Each subspace $\VV_h(i)$
is $\g^h$-stable.

\begin{lm}  \label{lm:ass-alg-grad} 
Let $h\in\g$ be a rational semisimple element.
Given a $\g$-module $\VV$, consider the $h$-grading of the $\g$-module $\End\VV$, \ 
$     \End\VV=\bigoplus_{i\in\BQ} (\End\VV)_h(i)$. Then
\begin{itemize}
\item[\sf (i)]  \ this is an associative algebra grading;
\item[\sf (ii)] \ if\/ $\h$ is a subalgebra of $\g$, then $(\End\VV)^\h$ is an associative subalgebra
of\/ $\End\VV$. Moreover, if $[h,\h]\subset \h$, then $(\End\VV)^\h$ inherits the $h$-grading.
\end{itemize}
\end{lm}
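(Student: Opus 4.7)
The plan is to translate the statement into properties of the operator $A := \ad \rho_\VV(h)$ on $\End\VV$, where $\rho_\VV(h)$ is regarded as an element of $\End\VV$ and $\ad$ denotes the inner commutator. The $\g$-module structure on $\End\VV$ is the standard one, $x\cdot\phi = [\rho_\VV(x),\phi]$, so the $h$-grading of $\End\VV$ coincides with the eigenspace decomposition of $A$: one has $\phi\in(\End\VV)_h(i)$ if and only if $A(\phi)=i\phi$. With this reformulation, both assertions become properties of the operator $A$ acting on the associative algebra $\End\VV$.

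For part \textsf{(i)}, the key observation is that $A$, being an inner commutator, is a derivation of the associative algebra $\End\VV$. Consequently, for $\phi\in(\End\VV)_h(i)$ and $\psi\in(\End\VV)_h(j)$,
\[
A(\phi\psi) = A(\phi)\psi + \phi A(\psi) = (i+j)\,\phi\psi,
\]
so $\phi\psi\in(\End\VV)_h(i+j)$, establishing the associative grading.

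For part \textsf{(ii)}, the fact that $(\End\VV)^\h$ is closed under composition is immediate: if $\phi$ and $\psi$ commute with $\rho_\VV(x)$ for every $x\in\h$, then so does $\phi\psi$. To obtain the induced grading under the assumption $[h,\h]\subset\h$, my plan is to show that $(\End\VV)^\h$ is $A$-stable and then invoke rational semisimplicity of $A$ to decompose $(\End\VV)^\h$ into its $A$-eigenspaces $(\End\VV)^\h\cap (\End\VV)_h(i)$. Stability is a one-line Jacobi computation: for $\phi\in(\End\VV)^\h$ and $x\in\h$,
\[
[\rho_\VV(x), A(\phi)] = [\rho_\VV([x,h]),\phi] + [\rho_\VV(h),[\rho_\VV(x),\phi]],
\]
and both terms vanish because $[x,h]\in\h$ (so the first bracket is zero by $\h$-invariance of $\phi$) and $[\rho_\VV(x),\phi]=0$ by hypothesis.

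The whole lemma is essentially a reformulation of the Leibniz and Jacobi identities, so I do not foresee any real obstacle; the only care needed is bookkeeping over whether $\rho_\VV$ is applied inside or outside the commutator brackets and remembering that $[h,\h]\subset\h$ enters only in the final paragraph to ensure $A$ preserves $\h$-invariants.
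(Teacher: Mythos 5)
Your proof is correct and follows essentially the same route as the paper: part (i) is the same Leibniz computation (the paper expands $[\rho_\VV(h),AB]$ explicitly rather than invoking the derivation property by name), and your Jacobi-identity argument for part (ii) is a correct filling-in of the paper's one-word ``Similarly.''
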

\begin{proof}
(i) The $\g$-module structure in $\End\VV$ is given by 
\[
    (x,A) \mapsto \rho_{\VV}(x)A-A\rho_{\VV}(x)=[\rho_{\VV}(x),A], \quad x\in\g, A\in\End\VV .
\]
If $[\rho_{\VV}(h),A]=iA$ and $[\rho_{\VV}(h),B]=jB$ with $i,j\in\BQ$, then
\[
  [\rho_{\VV}(h),AB]=\rho_{\VV}(h)AB-AB\rho_{\VV}(h)=\bigl(iA+A\rho_{\VV}(h)\bigr)B-A\bigl(\rho_{\VV}(h)B-jB\bigr)=(i+j)AB .
\]
(ii) \ Similarly.
\end{proof}

\begin{lm}  \label{lm:no-idempot}
Let $\ca$ be a finite-dimensional $\BN$-graded unital associative algebra,  
$\ca=\bigoplus_{i\in\BN}\ca(i)$. Suppose that $\ca(0)=\bbk{\cdot}I$, where $I$ is the unit.
Then $I$ is the only idempotent of $\ca$. 
\end{lm}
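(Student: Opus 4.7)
The plan is to decompose an arbitrary idempotent by degree and show that the positive-degree part must vanish, using the fact that squaring increases the minimal positive degree.

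More precisely, I would start with any idempotent $e\in\ca$ and write its homogeneous decomposition $e=e_0+e_1+\dots+e_N$ with $e_i\in\ca(i)$ (the sum is finite by finite-dimensionality). Since $\ca(0)=\bbk{\cdot}I$, we have $e_0=\lambda I$ for some $\lambda\in\bbk$, and in particular $e_0$ is central in $\ca$. Taking the degree-$0$ component of the relation $e^2=e$ and using that $\ca$ is $\BN$-graded as an associative algebra, I would read off $\lambda^2=\lambda$, so $\lambda\in\{0,1\}$.

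Next I would treat the two cases uniformly. Set $e_+:=e-\lambda I=e_1+\dots+e_N\in\bigoplus_{i\ge 1}\ca(i)$. In the case $\lambda=0$, expanding $e^2=e$ gives $e_+^2=e_+$; in the case $\lambda=1$, expanding $(I+e_+)^2=I+e_+$ and using centrality of $I$ gives $e_+^2=-e_+$. In either case, $e_+^2=\pm e_+$. Assume for contradiction that $e_+\neq 0$ and let $k\ge 1$ be the minimal index with $e_k\neq 0$. Since $\ca(i)\ca(j)\subset\ca(i+j)$, the lowest-degree homogeneous component of $e_+^2$ sits in degree $2k\ge k+1>k$, so the degree-$k$ component of $e_+^2$ is zero; but the degree-$k$ component of $\pm e_+$ equals $\pm e_k\neq 0$, a contradiction. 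Hence $e_+=0$ and $e\in\{0,I\}$, giving the claim (with the understanding, consistent with the paper's notion of \emph{nontrivial} idempotent, that $I$ is the only nonzero idempotent).

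There is no serious obstacle: the grading on $\ca$ is a grading of associative algebras (this has already been recorded in Lemma~\ref{lm:ass-alg-grad}\,(i) for the case of interest), so the degree argument on $e_+^2$ is immediate. The only point requiring a line of care is the centrality of $e_0=\lambda I$ when deriving the relation $e_+^2=-e_+$ in the case $\lambda=1$; otherwise cross terms $e_0e_+$ and $e_+e_0$ would need to be handled separately. Finite-dimensionality of $\ca$ is used only to ensure that the homogeneous decomposition of $e$ is finite, but the argument in fact goes through for any $\BN$-graded unital associative algebra with $\ca(0)=\bbk{\cdot}I$.
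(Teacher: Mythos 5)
Your proof is correct, and it follows the same overall strategy as the paper: write the idempotent as its degree-zero part $\lambda I$ plus a positive-degree part, deduce $\lambda^2=\lambda$, and show the positive part vanishes. The only real difference is the final step. The paper takes $p=cI+q$ with $q\in\bigoplus_{i\ge1}\ca(i)$, derives $q^2+q=0$, and concludes $q=0$ because $q$ is nilpotent (so $I+q$ is invertible); this nilpotence is exactly where finite-dimensionality enters, since in a general $\BN$-graded algebra a positive-degree element need not be nilpotent. You instead compare lowest-degree homogeneous components of $e_+^2=\pm e_+$: the left side lives in degrees $\ge 2k$ while the right side has a nonzero component in degree $k$, a contradiction. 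Your version therefore dispenses with finite-dimensionality altogether, as you note, and also handles the $\lambda=0$ case explicitly (which the paper's proof glosses over by asserting $c=1$). Both arguments are equally elementary; yours is marginally more general, the paper's is marginally shorter.
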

\begin{proof}
Any $p\in\ca$ can be written as $p=cI+q$, where $c\in\bbk$ and $q\in \bigoplus_{i\ge 1}\ca(i)$.
If $p^2=p$, then $c=1$ and $q^2+q=0$. As  $q^n=0$ for $n\gg 0$, $1+q$ is invertible and
$q=0$.
\end{proof}

{\bf Warning.}  If $\dim\ca(0)\ge 2$, then $\ca$ may have non-trivial idempotents that are not contained in $\ca(0)$.
\\[.7ex]
We also need a slightly different version that concerns the case in which $(\End\VV)^\h$ is 
positively multigraded. If $[\te,\h] \subset \h$, then the associative algebra $(\End\VV)^\h$
is being decomposed in a finite sum of $\te$-weight spaces,
\beq   \label{eq:multigrad}
       (\End\VV)^\h=\bigoplus_{\nu\in\eus P}(\End\VV)^\h_\nu .
\eeq
\begin{lm} \label{lm:no-idemp-multi}
Suppose that the set $\eus P(\VV,\h)=\{\nu\in\eus P\mid (\End\VV)^\h_\nu\ne 0\}$ is 
contained in a closed strictly convex cone $\eus C\subset\eus E$ and 
$(\End\VV)^\h_0=\bbk{\cdot}\mathsf{Id}_\VV$. Then $\mathsf{Id}_\VV$ is the only idempotent in
$(\End\VV)^\h$ and $\VV$ is $\h$-indecomposable.
\end{lm}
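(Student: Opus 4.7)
The plan is to reduce Lemma~\ref{lm:no-idemp-multi} to Lemma~\ref{lm:no-idempot} by collapsing the multigrading \eqref{eq:multigrad} to an $\BN$-grading via a single rational semisimple element $h\in\te$. The key input is that a closed strictly convex cone admits a linear functional that is strictly positive on its nonzero elements.

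First, since $\End\VV$ is finite-dimensional, $\eus P(\VV,\h)$ is a finite subset of $\eus C$. The cone $\eus C$ being closed and strictly convex, its dual cone has nonempty interior, so I can pick a linear functional $\ell$ on $\eus E$ that is strictly positive on $\eus C\setminus\{0\}$, and in particular on the finite set $\eus P(\VV,\h)\setminus\{0\}$. By a small perturbation I can arrange $\ell$ to be $\BQ$-valued on the weight lattice $\eus P$ while preserving strict positivity on the finite set $\eus P(\VV,\h)\setminus\{0\}$; after rescaling I may assume $\ell$ takes values in $\BN$ on $\eus P(\VV,\h)$, with $\ell(\nu)=0$ exactly when $\nu=0$.

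Second, under the identification $\eus E\cong\te$ via the inner product, $\ell$ corresponds to a rational semisimple element $h\in\te$ with $\nu(h)=\ell(\nu)$ for all $\nu\in\eus P$. Since $[h,\h]\subset[\te,\h]\subset\h$ by hypothesis, Lemma~\ref{lm:ass-alg-grad} applies: the $h$-grading of $\End\VV$ is an associative algebra grading and restricts to a grading on $(\End\VV)^\h$. This grading is nothing but the coarsening of \eqref{eq:multigrad} given by
\[
    (\End\VV)^\h_h(i)=\bigoplus_{\substack{\nu\in\eus P(\VV,\h)\\ \ell(\nu)=i}}(\End\VV)^\h_\nu .
\]
By the choice of $\ell$, only indices $i\in\BN$ occur, and the degree-$0$ part equals $(\End\VV)^\h_0=\bbk{\cdot}\mathsf{Id}_\VV$.

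Finally, Lemma~\ref{lm:no-idempot} applied to $\ca=(\End\VV)^\h$ yields that $\mathsf{Id}_\VV$ is the only nonzero idempotent in $(\End\VV)^\h$. By the idempotent/decomposition correspondence recorded in the introduction, this is exactly the statement that $\VV$ is $\h$-indecomposable. The only nontrivial point in the argument is the existence of a rational separating functional $\ell$; this is a routine convex-geometry observation, made easy by the finiteness of $\eus P(\VV,\h)$, so there is no serious obstacle.
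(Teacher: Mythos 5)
Your proof is correct and follows essentially the same route as the paper: the paper also chooses a rational element $h\in\te$ with $\mu(h)>0$ on $\eus C\setminus\{0\}$ and integral on $\eus P(\VV,\h)$, specialises the multigrading \eqref{eq:multigrad} to the $h$-grading, and invokes Lemmas~\ref{lm:ass-alg-grad} and \ref{lm:no-idempot}. You have merely spelled out the convex-geometry step (existence of the rational separating functional) that the paper leaves implicit.
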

\begin{proof}
Let $h\in\te$ be a rational element such that $\mu(h)>0$ for all $\mu\in\eus C\setminus\{0\}$
and $\mu(h)\in \BZ$ for all $\mu\in\eus P(\VV,\h)$. Then \eqref{eq:multigrad} can be specialised 
to the $h$-grading, where Lemmas~\ref{lm:ass-alg-grad} and \ref{lm:no-idempot} apply.
Alternatively, one can directly prove that \eqref{eq:multigrad} is an associative algebra grading
and the argument of Lemma~\ref{lm:no-idempot} goes through for positive multigradings.
\end{proof}

\begin{rmk}
For future use, we recall the standard fact that if $\VV$ is a simple $\g$-module, then
all $\te$-weights of the $\g$-module $\End\VV$ belong to the root lattice $\eus Q$.
\end{rmk}

\section{The nilradical of a proper parabolic subalgebra is wide}
\label{sect:radical}

\noindent
Let $\Pi'$ be an arbitrary subset of $\Pi$. 
If $\gamma=\sum_{\ap\in\Pi} a_\ap\ap\in\Delta$, then  $\hot_{\Pi'}(\gamma)=\sum_{\ap\in\Pi'}a_\ap$ is called the $\Pi'$-{\it height\/} of $\gamma$. For $\Pi'=\Pi$, one obtains the usual notion of the height.

Let $\p$ be the standard parabolic subalgebra of $\g$ determined by  $\Pi'\subset\Pi$.
That is, 
\[
   \p=\te\oplus \bigl(\bigoplus_{\gamma:\ \hot_{\Pi'}(\gamma)\ge 0}\g_\gamma \bigr) .
\]
Then \qquad 
$\displaystyle \p_{nil}=\n=\bigoplus_{\gamma:\ \hot_{\Pi'}(\gamma)> 0}\g_\gamma\quad \text{ is the nilpotent radical of $\p$, and}$
\[ 
 \el=\te\oplus \bigl(\bigoplus_{\gamma:\ \hot_{\Pi'}(\gamma)= 0}\g_\gamma \bigr) \quad \text{ is
 the standard Levi subalgebra of $\p$}.
\]

\begin{lm}   \label{lm:n-invar}
 If\/ $\VV$ is a simple $\g$-module, then $\VV^\n$ is a simple $\el$-module.
\end{lm}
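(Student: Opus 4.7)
The plan is to introduce the unique element $h \in \te$ defined by $\ap(h) = 1$ for $\ap \in \Pi'$ and $\ap(h) = 0$ for $\ap \in \Pi \setminus \Pi'$. Then $h$ acts on $\g_\gamma$ as multiplication by $\hot_{\Pi'}(\gamma)$, which gives $\el = \g^h$, $\n = \bigoplus_{i>0}\g_h(i)$, and $\ov\n := \bigoplus_{i<0}\g_h(i)$ is the nilradical of the opposite parabolic. Crucially, $h$ lies in the centre of $\el$, so the $h$-grading $\VV = \bigoplus_j \VV_h(j)$ is an $\el$-module grading, and its top eigenvalue on $\VV$ is $\lb(h)$, where $\lb$ is the highest weight of $\VV$ (since every other weight is $\lb$ minus a nonnegative combination of simple roots, which can only decrease the value of $h$). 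The easy preliminaries are immediate: $\VV^\n \ne 0$ since the highest weight vector $v_\lb$ belongs to it (as $\n \subset \ut^+$), and $\VV^\n$ is $\el$-stable because $[\el,\n]\subset \n$. In particular $\VV^\n$ inherits the $h$-grading.

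The key step will be to show that $\VV^\n \subset \VV_h(\lb(h))$, i.e., every $\n$-invariant vector sits in the top $h$-eigenspace. Take a nonzero $v \in \VV^\n \cap \VV_h(i)$ for some $i$. By the PBW decomposition $U(\g) = U(\ov\n)\,U(\el)\,U(\n)$ and the fact that $U(\n) v = \bbk v$ (since $\n v = 0$), one has $U(\g) v = U(\ov\n)\,U(\el)\,v$. This subspace is contained in $\VV_h(\le i)$ because $U(\el)$ preserves $h$-grades and $U(\ov\n)$ only lowers them. Simplicity of $\VV$ forces $U(\g) v = \VV$, hence $\lb(h) \le i$; combined with maximality of $\lb(h)$ we get $i = \lb(h)$.

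To finish, I would derive $\el$-simplicity as follows. Pick any nonzero $v \in \VV^\n$ and an arbitrary $u \in \VV^\n$. By simplicity, $U(\g) v = \VV$, so $u = \sum_k x_k y_k v$ with $x_k \in U(\ov\n)$ taken homogeneous of $h$-grade $-d_k \le 0$ and $y_k \in U(\el)$. Each summand lies in $\VV_h(\lb(h) - d_k)$, and projecting to the top $h$-eigenspace (where $u$ lives, by the previous step) kills all terms with $d_k > 0$; the surviving ones have $x_k \in U(\ov\n)_h(0) = \bbk$, so $u \in U(\el) v$. Hence $U(\el) v = \VV^\n$ for every nonzero $v$, which is the desired simplicity. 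The only step that is not quite routine is the top-eigenspace concentration, but it follows cleanly from PBW together with the simplicity of $\VV$; the rest is bookkeeping with the same tools.
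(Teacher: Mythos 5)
Your proof is correct, but it follows a genuinely different route from the paper's. The paper disposes of Lemma~\ref{lm:n-invar} in two lines: if $\ut(\el)$ is any maximal nilpotent subalgebra of $\el$, then $\ut(\el)\oplus\n$ is a maximal nilpotent subalgebra of $\g$, so $\dim(\VV^\n)^{\ut(\el)}=\dim\VV^{\ut(\el)\oplus\n}=1$; since for a finite-dimensional module over the reductive algebra $\el$ the dimension of the $\ut(\el)$-fixed space counts the simple summands, $\VV^\n$ must be simple. You instead show directly that every nonzero $v\in\VV^\n$ is $U(\el)$-cyclic in $\VV^\n$, via the PBW factorisation $U(\g)=U(\ov\n)\,U(\el)\,U(\n)$ and the $h$-grading. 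Your intermediate step $\VV^\n\subset\VV_h(\lb(h))$ is precisely part (i) of Lemma~\ref{lm:high-irr} (namely $\VV^\n=\VV(m)$), which the paper derives \emph{from} the present lemma rather than the other way around; so your argument also yields an independent proof of that later statement. The trade-off: your route needs more bookkeeping but avoids invoking complete reducibility and the multiplicity count, whereas the paper's is shorter at the price of leaning on the standard fact that highest-weight lines detect simple summands. The individual steps you use --- $U(\n)v=\bbk v$, the degree estimate $U(\ov\n)\,U(\el)\,v\subset\bigoplus_{j\le i}\VV_h(j)$, and $U(\ov\n)_h(0)=\bbk\cdot 1$ --- all check out.
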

\begin{proof}
If $\ut(\el)$ is an arbitrary maximal nilpotent subalgebra of $\el$, then $\ut(\el)\oplus\n$ is a
maximal nilpotent subalgebra of $\g$. Therefore,
$ \dim(\VV^\n)^{\ut(\el)}=\dim \VV^{\ut(\el)\oplus\n}=1$.
\end{proof}

We extend the $\Pi'$-height to the whole of $\eus P$, using the same formulae as above. That is,
if $\nu=\sum_{\ap\in\Pi} b_\ap\ap\in\eus P$, then  $\hot_{\Pi'}(\nu)=\sum_{\ap\in\Pi'}b_\ap$.
The coefficients $b_\ap$ and hence $\hot_{\Pi'}(\nu)$ can be rational. More precisely,  
$b_\ap\in \frac{1}{f}\BZ$, where $f=[\eus P:\eus Q]$ is the index of connection of $\Delta$.
In this way, one obtains the canonical {\it grading of type\/} $\Pi'$ in any $\g$-module $\VV$.
Namely,
\beq   \label{eq:Z-grad-W}
     \VV=\bigoplus_{i\in\frac{1}{f}\BZ} \VV(i),
\eeq
where $\VV(i)$ is the sum of weight spaces of $W$ corresponding to the weights of 
$\Pi'$-height $i$. 
Obviously, 
\beq    \label{eq:sovpad}
   \hot_{\Pi'}(\nu)=(\sum_{\ap\in\Pi'}\vp_\ap^\vee,\nu) ,
\eeq
where $\vp_\ap^\vee=2\vp_\ap/(\ap,\ap)$. Therefore, the grading of type $\Pi'$ is nothing but the
$h$-grading in the sense of Subsection~\ref{subs:h-grad}, with $h=\sum_{\ap\in\Pi'}\vp_\ap^\vee\in \te^*\simeq \te$.

Clearly, each $\VV(i)$ is an $\el$-module and 
$\g_\ap{\cdot}\VV(i)\subset \VV(i{+}1)$
if $\ap\in\Pi'$, i.e., $\hot_{\Pi'}(\ap)=1$. 
If $\VV$ is a simple $\g$-module, then all $i\in\BQ$ such that $\VV(i)\ne 0$ give rise to one and
the same element in $\BQ/\BZ$. 
Moreover, if all the weights of $\VV$ belong to $\eus Q$, then the grading of type $\Pi'$ is
a $\BZ$-grading on $\VV$. 

To avoid a cumbersome notation, we assume below that $\g$ is simple
(see also Remark~\ref{rem:simple-semisimple}). Let $\p$ be a proper parabolic subalgebra, i.e., $\Pi'\ne
\varnothing$.

\begin{lm}    \label{lm:high-irr}
Let $\VV$ be a simple $\g$-module equipped with the canonical grading of type $\Pi'$
\eqref{eq:Z-grad-W}. Set $m=\max\{ i \mid \VV(i)\ne 0\}$.
Then {\sf (i)} $\VV^\n=\VV(m)$ and $m\ge 0$; {\sf (ii)} $m=0$ if and only if\/ $\VV$ is a trivial 
one-dimensional module.
\end{lm}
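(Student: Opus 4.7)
The plan is to exploit the identification, already noted in~\eqref{eq:sovpad}, of the grading of type $\Pi'$ on $\VV$ with the $h$-grading for the rational semisimple element $h=\sum_{\ap\in\Pi'}\vp_\ap^\vee\in\te$. Since $(h,\gamma)=\hot_{\Pi'}(\gamma)$ vanishes on every root of the Levi $\el$, the element $h$ is central in $\el$; and since $\hot_{\Pi'}(\gamma)>0$ for every $\gamma$ with $\g_\gamma\subset\n$, the nilradical $\n$ acts by strictly raising the $h$-degree. Consequently $\n\cdot\VV(m)=0$, i.e., $\VV(m)\subset\VV^\n$.

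To upgrade this inclusion to equality, I would invoke Lemma~\ref{lm:n-invar}: $\VV^\n$ is a simple $\el$-module, and Schur's lemma, combined with the centrality of $h$ in $\el$, forces $h$ to act on $\VV^\n$ as a single scalar. Hence $\VV^\n$ sits inside one graded component $\VV(i)$, and the previous inclusion then gives $\VV^\n=\VV(m)$, which is the first half of (i).

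For the remaining claims I would use the highest weight. The highest weight vector $v_\lb$ is annihilated by $\ut^+\supset\n$ and lies in $\VV(\hot_{\Pi'}(\lb))$, whence $m=\hot_{\Pi'}(\lb)$. Writing $\lb=\sum_{\beta\in\Pi}n_\beta\ap_\beta$, the standard positivity property of dominant weights gives $n_\beta\ge 0$ for all $\beta$, and therefore $m=\sum_{\ap\in\Pi'}n_\ap\ge 0$. This establishes (i) and the easy direction of (ii). For the converse in (ii) I would invoke the classical fact that, for an irreducible root system, every entry of the inverse Cartan matrix is strictly positive: then if $\lb\ne 0$, expanding $\lb$ in fundamental weights $\vp_\beta$ and converting to simple roots forces each $n_\ap>0$, and thus $m>0$ because $\Pi'$ is nonempty.

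The only step that uses more than the formal grading machinery is this positivity of the inverse Cartan entries, which also pinpoints where the simplicity of $\g$ enters; I expect it to be the main---though very mild---obstacle. Everything else follows directly from the $h$-grading setup of Subsection~\ref{subs:h-grad} and from Lemma~\ref{lm:n-invar}.
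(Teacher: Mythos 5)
Your proposal is correct and follows essentially the same route as the paper: the inclusion $\VV(m)\subset\VV^\n$ from the grading, equality via the simplicity of $\VV^\n$ as an $\el$-module (Lemma~\ref{lm:n-invar}), and the identification $m=\hot_{\Pi'}(\lb)$ together with the strict positivity of the entries of the inverse Cartan matrix for part (ii). The only cosmetic difference is that the paper deduces $\VV^\n=\VV(m)$ by noting that $\VV(m)$ is a nonzero $\el$-submodule of the simple module $\VV^\n$, whereas you argue via Schur's lemma applied to the central element $h$; these are equivalent.
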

\begin{proof}
Let $\lb\in \fxpl$ be the highest weight of $\VV$, and $\lb=\sum_{a\in\Pi} c_\ap\ap$.
Clearly, $m=\hot_{\Pi'}(\lb)=\sum_{\ap\in\Pi'} c_\ap$. Here \un{all} the coefficients $c_\ap$
are strictly positive if $\lb\ne 0$. (This follows from the fact that \un{all} the entries of the inverse of the Cartan 
matrix of $\Delta$ are strictly positive~\cite{lt92}.) Hence if $\lb\ne 0$, then $\hot_{\Pi'}(\lb) >0$ for any 
non-empty $\Pi'$.

Since $\g_\gamma{\cdot}\VV(i)\subset \VV(i{+}\hot_{\Pi'}(\gamma))$
for any $\gamma\in\Delta^+$, we have $\VV(m)\subset \VV^\n$. On the other hand, $\VV^\n$
is a simple $\el$-module (Lemma~\ref{lm:n-invar}), hence $\VV(m)= \VV^\n$.
\end{proof}

\begin{thm}   \label{thm:main1-pos-grad}
For any nonempty subset $\Pi'\subset\Pi$ and any
simple finite-dimensional $\g$-module $\VV$, 
\begin{itemize}
\item[\sf (i)] \  the grading of type $\Pi'$ on $(\End\VV)^\n$ is actually an $\BN$-grading 
and $(\End\VV)^\n(0)=\bbk{\cdot}\mathsf{Id}_\VV$. 
\item[\sf (ii)] \  $(\End\VV)^\n$ contains no non-trivial idempotents and, therefore,
$\n$ is a \ri\ subalgebra of $\g$.
\end{itemize}
\end{thm}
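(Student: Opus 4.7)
The plan is to exploit complete reducibility of $\End\VV$ as a $\g$-module and to track the $h$-grade of each simple summand, where $h=\sum_{\ap\in\Pi'}\vp_\ap^\vee$. First, I would write
\[
    \End\VV=\bigoplus_{\mu}\VV_\mu^{\oplus k_\mu}
\]
where $\VV_\mu$ is the simple $\g$-module of highest weight $\mu$, and $\mu$ ranges over dominant weights in the root lattice $\eus Q$ (since $\End\VV\simeq \VV^*\otimes\VV$ has all $\te$-weights in $\eus Q$, by the Remark after Lemma~\ref{lm:no-idemp-multi}). By Schur's lemma the trivial summand occurs with multiplicity one and equals $\bbk{\cdot}\mathsf{Id}_\VV$.

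Next, I would take $\n$-invariants term by term. By Lemma~\ref{lm:n-invar}, each $\VV_\mu^\n$ is a simple $\el$-module, and its $\te$-weights are of the form $\mu-\sum_{\beta\in\Pi\setminus\Pi'}k_\beta\beta$. Since $\hot_{\Pi'}(\beta)=0$ for $\beta\in\Pi\setminus\Pi'$, every such weight has $h$-grade exactly $\hot_{\Pi'}(\mu)$. Hence the direct summand $(\VV_\mu^\n)^{\oplus k_\mu}$ sits entirely in $h$-grade $\hot_{\Pi'}(\mu)$, and
\[
    (\End\VV)^\n=\bbk{\cdot}\mathsf{Id}_\VV\ \oplus\ \bigoplus_{\mu\ne 0}(\VV_\mu^\n)^{\oplus k_\mu}.
\]

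The crux is showing $\hot_{\Pi'}(\mu)>0$ whenever $\mu$ is a nonzero dominant weight of $\g$ and $\Pi'\ne\varnothing$. Writing $\mu=\sum_{\ap\in\Pi}b_\ap\ap$, the coefficients $b_\ap$ are obtained by applying the inverse Cartan matrix to the (nonzero, non-negative) vector of fundamental-weight coefficients of $\mu$. For simple $\g$ this inverse has strictly positive entries (as already invoked in Lemma~\ref{lm:high-irr} via \cite{lt92}), so all $b_\ap>0$, and consequently $\hot_{\Pi'}(\mu)=\sum_{\ap\in\Pi'}b_\ap>0$. This yields (i): the grading on $(\End\VV)^\n$ is an honest $\BN$-grading whose degree-zero piece is $\bbk{\cdot}\mathsf{Id}_\VV$. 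Part (ii) is then immediate from Lemma~\ref{lm:no-idempot} together with the equivalence recorded in the introduction.

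The only genuine obstacle is the positivity statement in the third paragraph, but this is a known fact about simple root systems and is precisely what forces the assumption that $\g$ is simple (or, more generally, that $\p$ contains no simple ideals, which is handled by applying the argument to each simple factor separately).
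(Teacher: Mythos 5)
Your argument is correct and follows essentially the same route as the paper: decompose $\End\VV$ into simple $\g$-summands, use Lemma~\ref{lm:n-invar} to see that the $\n$-invariants of each nontrivial summand lie in a single positive degree (the paper packages this, together with the strict positivity of the inverse Cartan matrix, as Lemma~\ref{lm:high-irr}), and then conclude via Schur's lemma and Lemma~\ref{lm:no-idempot}. The only step you leave tacit is that the grading of type $\Pi'$ on $(\End\VV)^\n$ --- being the $h$-grading for $h=\sum_{\ap\in\Pi'}\vp_\ap^\vee\in\te$, which normalises $\n$ --- is an \emph{associative algebra} grading (Lemma~\ref{lm:ass-alg-grad}), a hypothesis that must be checked before Lemma~\ref{lm:no-idempot} applies.
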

\begin{proof}
(i) Since the weights of the $\g$-module $\End\VV$ belong to the root lattice $\eus Q$,
the grading of type $\Pi'$ on $\End\VV$ is actually a $\BZ$-grading. 
We have $\End\VV\simeq \VV\otimes\VV^*=\sum_{j=1}^k \VV_j$, where $\VV_j$ are certain
simple $\g$-modules. 
If $\VV_j=\bigoplus_{i\in\BZ} \VV_j(i)$ be the $\BZ$-grading of type $\Pi'$ and
$m_j:=\max\{ i \mid \VV_j(i)\ne 0\}$, then
\beq     \label{eq:n-grading-EndVn}
   (\End\VV)^\n=\bigoplus_{j=1}^k \VV_j(m_j) 
\eeq
is the direct sum of simple $\el$-modules and also a (refinement of) $\BN$-grading. By the Schur 
lemma, $\VV\otimes \VV^*$ contains a unique trivial one-dimensional $\g$-module, and this unique
trivial module is the line through $\mathsf{Id}_\VV:\VV\to \VV$.
In view of Lemma~\ref{lm:high-irr}, we may assume that
$m_1=0$ and $m_j>0$ for $j\ge 2$. 

(ii)  The grading of type $\Pi'$ in $\End\VV$ is also the $(\sum_{\ap\in\Pi'}\vp_\ap^\vee)$-grading
(see Eq.~\eqref{eq:sovpad}). Then
Lemma~\ref{lm:ass-alg-grad}(i) guarantees us that this is an associative algebra grading.
Furthermore, $\te$ normalises $\n$ and  $\sum_{\ap\in\Pi'}\vp_\ap^\vee$ is identified with an
element of $\te$. Therefore, \eqref{eq:n-grading-EndVn} is also an associative algebra grading and,
by Lemma~\ref{lm:no-idempot}, $\mathsf{Id}_\VV$ is the only idempotent in 
$(\End\VV)^\n$.
\end{proof}

\begin{rmk}  \label{rmk:min-nil-rad}
It is known that $\dim\n =\dim G/P\ge \rk(\g)$, and the equality only occurs for
the maximal parabolic subalgebra of
$\slno$ such that $\Pi'=\{\ap\}$ and $\ap$ is an extreme root in the Dynkin diagram,
see \cite{vi72}. In particular, if $\dim\n=\rk(\g)$, then $\n$ is abelian.
\end{rmk}

\begin{rmk}   \label{rem:simple-semisimple}
If $\g$ is semisimple but not simple, then $\g=\prod_{j}\g^{(j)}$ is the product of simple ideals and 
$\Pi=\bigcup_j \Pi^{(j)}$. It is then easily seen that Lemma~\ref{lm:high-irr} and Theorem~\ref{thm:main1-pos-grad}
remain true if $\Pi'\cap \Pi^{(j)}\ne \varnothing$ for all $j$, i.e., if $\p$ does not contain simple ideals of $\g$.
\end{rmk}
 
\section{The nilradical of the centraliser of a non-degenerate nilpotent element is \ri}
\label{sect:reg-nilp}

\noindent
Let $\fN$ be the set of all nilpotent elements of $\g$.  Throughout this section, we assume that 
$e\in\fN$ is nonzero. To present a (well-known) description of the nilpotent radical of $\zge$, we need the machinery of $\tri$-triples and respective $\BZ$-gradings of $\g$.
By the Morozov-Jacobson theorem, any nonzero $e\in\fN$ can  be embedded
into an $\tri$-triple $\{e,h,f\}$ (i.e., $[h,e]=2e$, $[h,f]=-2f$, $[e,f]=h$) \cite[3.3]{CM}. The eigenvalues 
of  $h$ in any $\g$-module are integral, hence the $h$-grading in any simple $\g$-module is actually
a $\BZ$-grading.

As in Subsection~\ref{subs:h-grad}, the semisimple element $h$ 
determines the $h$-grading of $\g$:
\[
 \g=\bigoplus_{i\in\BZ}\g_h( i)   ,
\]
where $\g_h( i) =\{\,x\in\g\mid [h,x]=ix\,\}$. Then $e\in\g_h(2)$ and $f\in\g_h(-2)$.

The following facts on the structure of this grading and the centraliser
$\g^e$ are standard, see \cite[ch.\,III, \S\,4]{ss} or \cite[Ch.\,3]{CM}.
\begin{prop}      \label{prop:stab}
Let $\{e,h,f\}$ be an $\tri$-triple. Then
\begin{itemize}
\item[\sf (i)] \ the Lie algebra $\g^e$ is non-negatively
graded: $\g^e=\bigoplus_{i\ge 0} \g^e_h( i) $, where $\g^e_h( i) =\g^e\cap\g_h( i) $. 
Here $\g^e_{nil}:=\bigoplus_{i\ge 1}\g^e_h(i)$ is the nilpotent  radical 
and $\g^e_{red}:=\g^e_h(0)$ is a Levi subalgebra of $\g^e$; actually, $\g^e_h(0)=\g^{\{e,h,f\}}$.
\item[\sf (ii)] \ 
$\ad e:\g_h( i-2)\to\g_h( i) $ is injective
for $i\le 1$ and surjective for $i\ge 1$;
\item[\sf (iii)] \ $\dim\g^e=\dim \g_h( 0)+\dim \g_h( 1) $ and\/
$\dim\g^e_{nil}=\dim \g_h( 1)+\dim \g_h( 2) $.
\end{itemize}
\end{prop}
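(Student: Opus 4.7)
My plan is to derive everything from the standard fact that, via the adjoint action of the $\tri$-subalgebra $\langle e,h,f\rangle$, the Lie algebra $\g$ decomposes as a direct sum of irreducible $\tri$-modules $V(d)$ (with $d\in\BN$), where $V(d)$ has one-dimensional $h$-weight spaces at the weights $-d,-d+2,\dots,d$ and $\ker(\ad e|_{V(d)})=V(d)_d$.

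I would prove (ii) first, since (i) and (iii) flow from it. On each summand $V(d)\subset\g$, the map $\ad e:V(d)_{i-2}\to V(d)_i$ is zero when $i-2=d$ (the top weight space) and is an isomorphism of one-dimensional spaces when $-d\le i-2<d$. For $i\le 1$ we have $i-2\le -1<0\le d$, so $i-2\ne d$ in every summand, giving injectivity. For $i\ge 1$, if $V(d)_i\ne 0$ then $d\ge i\ge 1$, so $i-2\ge -1\ge -d$, so $V(d)_{i-2}\ne 0$, and then the map is an isomorphism onto $V(d)_i$; summing over $d$ gives surjectivity.

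For (i), I would observe that $\zge\subset\bigoplus_{i\ge 0}\g_h(i)$ because $\ker(\ad e)$ lives in the top weight spaces, which have non-negative weight. The piece $\zge\cap\g_h(0)$ is exactly the sum of the trivial $\tri$-summands $V(0)\subset\g$, which coincides with $\g^{\{e,h,f\}}$; this algebra is reductive by the standard fact that the centraliser of a reductive subalgebra in a reductive Lie algebra is reductive. The complement $\zgen:=\bigoplus_{i\ge 1}\g^e_h(i)$ is obviously closed under the bracket (positive grading), is stable under $\g^e_h(0)$ (which preserves the grading), and is nilpotent as an abstract Lie algebra since it is positively graded and finite-dimensional; its elements are $\ad$-nilpotent on $\g$ and hence nilpotent in the semisimple $\g$. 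Thus $\zge=\g^e_h(0)\ltimes \zgen$ is a Levi decomposition. The only subtle point is the \emph{maximality} of $\zgen$ as a nilpotent ideal: any strictly larger nilpotent ideal would project to a non-zero nilpotent ideal of the reductive algebra $\g^e_h(0)$, which is impossible.

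Finally, (iii) is a telescoping dimension count using (ii). From surjectivity for $j+2\ge 1$, i.e.\ $j\ge -1$, the map $\ad e:\g_h(j)\to\g_h(j+2)$ is surjective, so $\dim\g^e_h(j)=\dim\g_h(j)-\dim\g_h(j+2)$ for $j\ge 0$. Summing over $j\ge 0$ collapses to $\dim\g_h(0)+\dim\g_h(1)$, and summing over $j\ge 1$ collapses to $\dim\g_h(1)+\dim\g_h(2)$. I expect the reductivity of $\g^e_h(0)$ and the identification of $\zgen$ as the \emph{nilpotent radical} (not merely a nilpotent ideal) to be the one step needing an external appeal; everything else is self-contained $\tri$-bookkeeping.
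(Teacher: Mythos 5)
The paper does not actually prove this proposition: it is stated as a collection of standard facts with references to Springer--Steinberg and to Collingwood--McGovern, and your argument is exactly the standard $\tri$-weight bookkeeping that those sources carry out, so there is no methodological divergence to report. Your treatment of (ii) via the summands $V(d)$ and the telescoping count for (iii) are correct as written. The one imprecise step is the maximality claim for $\zgen$ in (i): it is \emph{not} true that a reductive Lie algebra has no nonzero nilpotent ideal --- its centre is an abelian, hence nilpotent, ideal. What rescues the argument is that $\g^e_h(0)=\g^{\{e,h,f\}}$ is reductive \emph{in} $\g$, so its centre consists of semisimple elements of $\g$ and therefore no nonzero ideal of $\g^e_h(0)$ consists of nilpotent elements of $\g$; an ideal of $\g^e$ strictly larger than $\zgen$ and consisting of nilpotent elements of $\g$ would, after intersecting with $\g^e_h(0)$ (legitimate, since such a canonical ideal is stable under $\ad h$, which normalises $\g^e$ and defines the grading), produce exactly such a forbidden ideal. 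With that correction, and with ``nilpotent radical'' understood in the algebraic-group sense as the Lie algebra of the unipotent radical of $Z_G(e)$, your proof is complete and self-contained.
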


This provides a rather good understanding of the nilpotent radical $\zgen$.
Recall that $e$ is said to be {\it principal}, if $\dim\g^e=\rk(\g)$, and then $\g^e=\zgen$.
In this case we also say that $\langle e,h,f\rangle$ is a principal $\tri$-subalgebra.

If $\g=\g^{(1)}\oplus\g^{(2)}$ is a sum of two ideals and $e=e_1+e_2$, with $e_i\in\g^{(i)}$, then
$\g^e=(\g^{(1)})^{e_1}\oplus (\g^{(2)})^{e_2}$. Therefore, $(\g^{(i)})^{e_i}_{nil}=0$ if and only if
$e_i=0$. We say that $e\in\fN$ is {\it non-degenerate}, if $e$ has a non-trivial projection to every
simple ideal of $\g$. If $\g$ is simple, then any nonzero $e\in\fN$ is non-degenerate.

\begin{lm}  \label{lm:generated}
Suppose that $e\in\fN$ is non-degenerate. Then
the subalgebra generated by $\zgen$ and $f$ is the whole of  $\g$.  
\end{lm}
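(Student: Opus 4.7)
\emph{Reduction to the simple case.} If $\g = \bigoplus_j \g^{(j)}$ is the decomposition into simple ideals and $e = \sum_j e_j$ with every $e_j \ne 0$ (the non-degeneracy hypothesis), one can choose the $\tri$-triple $\{e,h,f\}$ compatibly, so that $h = \sum_j h_j$ and $f = \sum_j f_j$, with each $\{e_j,h_j,f_j\}$ an $\tri$-triple in $\g^{(j)}$. Then $\zgen = \bigoplus_j (\g^{(j)})^{e_j}_{nil}$ and the subalgebra generated by $\zgen$ and $f$ decomposes as the direct sum of the analogous subalgebras in the $\g^{(j)}$. So one may assume $\g$ simple and $e \ne 0$.

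\emph{Getting most of $\g$ via $\tri$-theory.} Let $\h$ denote the subalgebra generated by $\zgen$ and $f$, and decompose $\g$ as an $\ad$-module for $\langle e,h,f\rangle \cong \tri$. Since $e \in \g^e_h(2) \subset \zgen$, both $e$ and $f$ lie in $\h$, hence so does $h=[e,f]$. By Proposition~\ref{prop:stab}(i), the highest-weight vectors of the non-trivial $\tri$-summands of $\g$ are exactly the elements of $\bigoplus_{i\ge 1}\g^e_h(i) = \zgen$. For any highest-weight vector $v \in \g^e_h(i)$ with $i\ge 1$, the corresponding irreducible $\tri$-submodule is spanned by $v,\,(\ad f)v,\ldots,(\ad f)^i v$, all contained in $\h$. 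Summing, $\h$ contains every non-trivial $\tri$-submodule of $\g$; in particular $\g_h(i)\subset\h$ for all $i\ne 0$, since the trivial $\tri$-summands lie in $\g_h(0)$.

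\emph{Catching the last piece via simplicity.} Set $\m := \sum_{i\ne 0}[\g_h(i),\g_h(-i)] \subset \g_h(0)$; by the previous step $\m\subset\h$. I claim that $\eus J := \m \oplus \bigoplus_{i\ne 0}\g_h(i)$ is an ideal of $\g$. Indeed, $[\g_h(i),\g_h(j)] \subset \g_h(i+j) \subset \eus J$ whenever $i+j\ne 0$; $[\g_h(i),\g_h(-i)] \subset \m$ for $i\ne 0$ by definition; and $\m$ is $\g_h(0)$-stable via the Jacobi identity applied to $[x,[y,z]]$ with $x\in\g_h(0)$, $y\in\g_h(i)$, $z\in\g_h(-i)$. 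Since $e \in \eus J$ is nonzero and $\g$ is simple, $\eus J = \g$, forcing $\m = \g_h(0)$ and therefore $\g_h(0) \subset \h$. Combined with the previous paragraph, $\h = \g$.

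The main obstacle is that the $\tri$-representation-theoretic argument of the second paragraph yields only the non-trivial $\tri$-summands of $\g$ and misses the centralizer $\g^{\{e,h,f\}} = \g^e_h(0) \subset \g_h(0)$, which consists entirely of $\tri$-fixed vectors. The simplicity of $\g$, invoked through the ideal $\eus J$, is precisely what bridges this gap and pushes $\g^{\{e,h,f\}}$ into $\m\subset\h$.
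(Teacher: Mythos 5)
Your proof is correct and follows essentially the same route as the paper: decompose $\g$ as a module over $\langle e,h,f\rangle$, use Proposition~\ref{prop:stab}(i) to see that $\zgen$ spans the highest-weight vectors of the nontrivial $\tri$-summands so that the subalgebra generated by $\zgen$ and $f$ contains all of those summands, and then use an ideal argument together with simplicity/non-degeneracy to absorb the $\tri$-invariants $\g^{\{e,h,f\}}$. The only cosmetic differences are that you first reduce to $\g$ simple (the paper keeps $\g$ semisimple and invokes non-degeneracy at the very end) and that you exhibit the explicit ideal $\eus J=\m\oplus\bigoplus_{i\ne 0}\g_h(i)$ built from the $h$-grading, whereas the paper shows directly that the subalgebra generated by the sum $\UU$ of the nontrivial summands is an ideal via $\g=\UU\oplus\g^{\es}$ and $[\g^{\es},\UU]\subset\UU$.
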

\begin{proof}
Set $\es=\langle e,h,f\rangle$. It is a three-dimensional simple subalgebra of $\g$.
Consider $\g$ as $\es$-module.
By Proposition~\ref{prop:stab}(i), $\zgen$ is the linear span of the highest vectors of  all nontrivial
simple $\es$-modules in $\g$.
Therefore, the minimal $(\ad f)$-stable subspace containing $\zgen$,
say $\UU$, is the sum of all nontrivial $\es$-submodules in $\g$, and the subalgebra generated by 
$\zgen$ and $f$ coincides with the subalgebra generated by $\UU$. 
The reductive algebra $\g^e_h(0)=\g^\es$ is the sum of all trivial $\es$-modules. Hence
$\UU\oplus \g^\es=\g$ and $[\g^\es,\UU]\subset\UU$. Let $\langle\UU\rangle$ be the subalgebra
generated by $\UU$. Then $[\g^\es,\langle\UU\rangle]\subset\langle\UU\rangle$ and
$[\UU,\langle\UU\rangle]\subset\langle\UU\rangle$. Hence $\langle\UU\rangle$ is an ideal of $\g$.
By the asumption, $\langle\UU\rangle$ has non-trivial projections to all simple ideals of $\g$. Hence
$\langle\UU\rangle=\g$ (cf. also \cite[Lemma\,4.1]{kac80}).
\end{proof}

A simple $\g$-module with highest weight $\lb\in\fxpl$ is denoted by $\vlb$, and 
$\rho_\lb$ is the corresponding representation of $\g$. 
If $\ah$ is any subset of $\g$, then 
\[
  \vlb^\ah=\{v\in\vlb \mid \rho_\lb(x)v=0 \ \ \forall x\in\ah\} .
\]
In particular, $\vlb^h$ is the zero weight space of the $h$-grading of $\vlb$.

\begin{prop}     \label{prop:even-normal}
Let $\{e,h,f\}$ be an $\tri$-triple in $\g$. If $e$ is non-degenerate and $\lb\ne 0$, then 
the $h$-eigenvalues in  $\vlb^\zgen$ are strictly positive.
\end{prop}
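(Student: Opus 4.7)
My plan is to combine elementary $\tri$-representation theory with Lemma~\ref{lm:generated} to squeeze $\vlb^\zgen$ into the strictly positive part of the $h$-grading. Set $\es=\langle e,h,f\rangle$ and view $\vlb$ as an $\es$-module.

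The easy half is non-negativity. By Proposition~\ref{prop:stab}(i), $e\in\g_h(2)\subset\zgen$, so $\vlb^\zgen\subset\vlb^e$. Decomposing $\vlb$ into simple $\es$-submodules, $\vlb^e$ is the span of the highest weight vectors of these summands, and the $h$-eigenvalue on each such vector is the $\es$-highest weight of its summand, hence a non-negative integer. So all $h$-eigenvalues occurring in $\vlb^\zgen$ are non-negative, and I only need to exclude $0$.

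This exclusion is the substantive step, and it is where the non-degeneracy hypothesis on $e$ enters, via Lemma~\ref{lm:generated}. Suppose $v\in\vlb^\zgen$ satisfies $\rho_\lb(h)v=0$. Since $e\in\zgen$, also $\rho_\lb(e)v=0$; combined with $\rho_\lb(h)v=0$, this forces $v$ to lie in a sum of trivial simple $\es$-submodules, so $\rho_\lb(f)v=0$ as well. Hence $v$ is annihilated by $\zgen$ and by $f$. By Lemma~\ref{lm:generated}, the subalgebra generated by $\zgen$ and $f$ is the whole of $\g$, so $v\in\vlb^\g$; simplicity of $\vlb$ and $\lb\ne 0$ force $\vlb^\g=0$, hence $v=0$. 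The only nontrivial point of the proof is noticing that Lemma~\ref{lm:generated} is exactly what handles the zero-eigenvalue case; the rest is bookkeeping.
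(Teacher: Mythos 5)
Your proof is correct and follows essentially the same route as the paper: non-negativity of the $h$-eigenvalues on $\vlb^\zgen\subset\vlb^e$ from $\tri$-theory, and then, for a vector $v$ killed by $\zgen$ and $h$, deducing $\rho_\lb(f)v=0$ and invoking Lemma~\ref{lm:generated} to conclude $v\in\vlb^\g=\{0\}$. The only difference is that you spell out in slightly more detail why annihilation by $e$ and $h$ forces annihilation by $f$, which the paper leaves implicit.
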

\begin{proof}  
It follows from the theory of $\tri$-representations that the $h$-eigenvalues in $\vlb^e$ are 
nonnegative. Hence the same is true for $\vlb^\zgen\subset \vlb^e$, and our goal is to prove that 
$0$ does not occur
as an $h$-eigenvalue in $\vlb^\zgen$. If
$v\in\vlb^h\cap \vlb^\zgen$, then $v$ is killed by both $h$ and $e$. Therefore, 
$\rho_\lb(f)(v)=0$. Thus, $v$ is killed by $f$ and $\zgen$. By Lemma~\ref{lm:generated}, the 
subalgebra generated by $f$ and $\zgen$ is $\g$. Hence $v\in\vlb^\g=\{0\}$.
\end{proof}

Now, we are ready to prove the main result of this section.

\begin{thm}   \label{thm:zge-pos-grad}
For any non-degenerate $e\in\fN$ and any simple finite-dimensional $\g$-module $\vlb$, we have
\begin{itemize}
\item[\sf (i)] \  
the $h$-grading on $(\End\vlb)^\zgen$ is an $\BN$-grading 
and $(\End\vlb)^{\zgen}_h(0)=\bbk{\cdot}\mathsf{Id}_{\vlb}$. 
\item[\sf (ii)] \  the associative algebra $(\End\vlb)^\zgen$ contains no non-trivial idempotents and, thereby,
$\zgen$ is a \ri\ subalgebra of $\g$.
\end{itemize}
\end{thm}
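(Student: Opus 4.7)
The plan is to mimic the proof of Theorem~\ref{thm:main1-pos-grad} almost verbatim, replacing the canonical grading of type $\Pi'$ by the $h$-grading and using Proposition~\ref{prop:even-normal} in place of Lemma~\ref{lm:high-irr}. First, note that $\zgen=\bigoplus_{i\ge 1}\g^e_h(i)$ is stable under $\ad h$, so by Lemma~\ref{lm:ass-alg-grad}(ii) the $h$-grading on $\End\vlb$ restricts to an associative algebra grading on the subalgebra of invariants $(\End\vlb)^\zgen$.

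Next, decompose $\End\vlb\simeq \vlb\otimes\vlb^*=\bigoplus_{j=1}^k \VV_j$ into simple $\g$-modules. Passing to $\zgen$-invariants gives
\[
   (\End\vlb)^\zgen=\bigoplus_{j=1}^k \VV_j^\zgen ,
\]
and this decomposition refines the $h$-grading. By Schur's lemma, exactly one summand, say $\VV_1$, is the trivial $\g$-module, and it is the line $\bbk\cdot\mathsf{Id}_\vlb$, sitting in $h$-degree $0$. For every other $j$, the $\g$-module $\VV_j$ is nontrivial, and Proposition~\ref{prop:even-normal} guarantees that the $h$-eigenvalues on $\VV_j^\zgen$ are strictly positive. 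This simultaneously shows that the $h$-grading on $(\End\vlb)^\zgen$ is non-negative (hence an $\BN$-grading) and that its degree-zero component equals $\bbk\cdot\mathsf{Id}_\vlb$, proving (i).

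For (ii), Lemma~\ref{lm:no-idempot} applied to the finite-dimensional $\BN$-graded unital associative algebra $(\End\vlb)^\zgen$ immediately yields that $\mathsf{Id}_\vlb$ is its only idempotent. By the idempotent characterisation recalled in the Introduction, $\vlb$ is $\zgen$-indecomposable. Since $\lb\in\fxpl$ was arbitrary, $\zgen$ is wide.

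The whole argument is structurally straightforward once Proposition~\ref{prop:even-normal} is in hand; the substantive step that had to be done separately was verifying that no nonzero vector fixed by $\zgen$ can live in the zero $h$-weight space, and that has already been secured via Lemma~\ref{lm:generated}. The only point demanding a little care here is the identification of the unique trivial summand in $\vlb\otimes\vlb^*$ with $\bbk\cdot\mathsf{Id}_\vlb$, which is exactly what allows us to conclude $(\End\vlb)^{\zgen}_h(0)=\bbk\cdot\mathsf{Id}_\vlb$ rather than merely $\supseteq$.
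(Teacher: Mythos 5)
Your proposal is correct and follows essentially the same route as the paper's own proof: the same decomposition $\End\vlb\simeq\vlb\otimes\vlb^*$ into simple summands, Proposition~\ref{prop:even-normal} to force strict positivity of the $h$-eigenvalues on the invariants of each nontrivial summand, and Lemmas~\ref{lm:ass-alg-grad} and \ref{lm:no-idempot} to exclude non-trivial idempotents. No substantive differences to report.
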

\begin{proof}
(i) We have $\End\vlb\simeq\vlb\otimes\vlb^*=\bigoplus_{i=1}^k\sfr(\lb_i)$, where all $\lb_i\in\eus Q\cap\fxpl$, and 
we may assume that $\lb_1=0$, while
$\lb_i\ne 0$ for $i\ge 2$.  Then 
\[
      (\End\vlb)^{\zgen}=\sfr(0)\oplus \bigl( \bigoplus_{i=2}^k \sfr(\lb_i)^{\zgen}\bigr) .
\]
It follows from Proposition~\ref{prop:even-normal} that the $h$-grading of $(\End\vlb)^{\zgen}$ is 
non-negative and the 
component of grade $0$ is just $\sfr(0)=\bbk{\cdot}\mathsf{Id}_{\vlb}$.

(ii) By Lemma~\ref{lm:ass-alg-grad}, the $h$-grading of  $(\End\vlb)^{\zgen}$ is compatible with
the structure of the associative algebra, and  by Lemma~\ref{lm:no-idempot},
$(\End\vlb)^{\zgen}$ contains no nontrivial idempotents. Thus, $\vlb$ is $\zgen$-indecomposable,
and thereby $\zgen$ is \ri.
\end{proof}

\begin{rmk}   \label{rem:cent-nilp-wide}
Using the classification of the nilpotent $G$-orbits in $\g$, one can verify that $\dim\zgen\ge \rk(\g)$ for any non-degenerate $e\in\fN$, and $\dim\zgen = \rk(\g)$ if and only if $e$ is a regular (=principal) nilpotent element.
Moreover, $\zgen$ is abelian if and only if $e$ is regular. It would be interesting to have a conceptual
proof for these observations.
\end{rmk}

\begin{rmk}   \label{rem:simple-semis2}
It can happen that $f$ and a {\bf proper} subalgebra $\ah\subset\zgen$ generate the whole of $\g$.
Then the above reasoning applies, and $\ah$ appears to be \ri. An instance of this phenomenon is
provided in Example~\ref{ex:wide-2-dim}.
\end{rmk}

\section{Some regular \textsf{ad}-nilpotent subalgebras are \ri}
\label{sect:pi-part}

\noindent
A subalgebra $\h\subset\g$ is said to be {\it regular}, if it is normalised by a Cartan subalgebra of 
$\g$. Without loss of generality, one may only consider regular subalgebras such that 
$[\te,\h]\subset\h$ for our fixed $\te$. We additionally assume below that $\h$ is 
\textsf{ad}-nilpotent. Then $\h=\bigoplus_{\gamma\in \Delta_\h} \g_\gamma$, where $\Delta_\h$ 
is a closed subset of $\Delta$ and $\Delta_\h \cap (-\Delta_\h)=\varnothing$. (Note that we do \un
{not} assume here that $\Delta_\h\subset\Delta^+$.)
Recall that a subset $\Gamma\subset\Delta$ is {\it closed\/} if whenever $\gamma_1,\gamma_2\in \Gamma$ and 
$\gamma_1{+}\gamma_2\in\Delta$, then $\gamma_1{+}\gamma_2\in\Gamma$;
the {\it closure\/} of  $\Gamma$ is the smallest closed subset of $\Delta$ containing $\Gamma$.
Write $\vlb_\mu$ for the $\mu$-weight space of  $\vlb$.
As is well-known 
\cite[Ch.\,VIII, \S\,7]{bour7-8},
\[
    \vlb_0\ne \{0\} \ \Leftrightarrow \ \lb \in \eus Q \ \Leftrightarrow \ \text{ all the weights of $\vlb$ belong to $\eus Q$}.
\]

\begin{lm}   \label{lm:conus1}  
Let $\h$ be as above. Then
\begin{itemize}
\item[\sf (i)] \ 
for any $\lb\in \fxpl$, we have 
$  \vlb^{\h}\subset   \bigoplus_{\mu\in \eus C} \vlb_\mu$, 
where $\eus C(\h)=\{\mu\in \eus E\mid (\mu, \gamma)\ge 0 \ \ \forall\gamma\in \Delta_\h \}$ is a  closed cone in $\eus E$, which does not depend on $\lb$;
\item[\sf (ii)] \ suppose that the closure of $\Delta_\h\cup (-\Delta_\h)$ equals $\Delta$.
Then $\eus C(\h)$ is a strictly convex cone and $\vlb^{\h}\subset   \bigoplus_{\mu\in \eus C(\h)\setminus\{0\}} \vlb_\mu$ for any nonzero
$\lb\in\fxpl$.
\end{itemize}
\end{lm}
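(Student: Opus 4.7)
My plan is to reduce both parts to a standard $\mathfrak{sl}_2$-bookkeeping. First I would observe that since $\te$ normalises $\h$ and each $\g_\gamma\subset\h$ is a $\te$-weight space, the action of $\g_\gamma$ sends $\vlb_\mu$ into $\vlb_{\mu+\gamma}$. Hence if $v=\sum_\mu v_\mu\in\vlb^\h$, grouping $x\cdot v$ by weight shows that each component $v_\mu$ is itself $\h$-invariant, and part (i) reduces to the case of a single weight vector. For a nonzero $v_\mu\in\vlb^\h\cap\vlb_\mu$ and any $\gamma\in\Delta_\h$, I would apply the $\mathfrak{sl}_2$-triple $\{e_\gamma,h_\gamma,e_{-\gamma}\}$, with $h_\gamma=2\gamma/(\gamma,\gamma)$: the $h_\gamma$-weight of $v_\mu$ is $2(\mu,\gamma)/(\gamma,\gamma)$ and $e_\gamma\cdot v_\mu=0$, so $v_\mu$ behaves as a highest weight vector in a finite-dimensional $\mathfrak{sl}_2$-module, forcing $(\mu,\gamma)\ge 0$. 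Running over $\gamma\in\Delta_\h$ places $\mu$ in $\eus C(\h)$, proving (i).

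For the strict-convexity half of (ii), I would check that $\mu\in\eus C(\h)\cap(-\eus C(\h))$ means $(\mu,\gamma)=0$ for every $\gamma\in\Delta_\h\cup(-\Delta_\h)$. The roots orthogonal to $\mu$ form a closed subset of $\Delta$ (if $\gamma_1+\gamma_2\in\Delta$ and $\mu\perp\gamma_1,\gamma_2$, then $\mu\perp\gamma_1+\gamma_2$), so this orthogonality extends to the closure of $\Delta_\h\cup(-\Delta_\h)$, which by hypothesis equals $\Delta$. Since $\Delta$ spans $\eus E$, this forces $\mu=0$.

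The remaining assertion of (ii) --- that no nonzero $\h$-invariant of weight $0$ can exist when $\lb\ne 0$ --- is the most delicate and I expect to be the main obstacle. Given $v_0\in\vlb_0\cap\vlb^\h$, for each $\gamma\in\Delta_\h$ the vector $v_0$ has $h_\gamma$-weight $0$ and is killed by $e_\gamma$; a highest weight vector of weight $0$ in a finite-dimensional $\mathfrak{sl}_2$-module generates the trivial one-dimensional submodule, so $e_{-\gamma}\cdot v_0=0$ is automatic. Hence $v_0$ is annihilated by $\g_\gamma$ for every $\gamma\in\Delta_\h\cup(-\Delta_\h)$. The key final step is to propagate this annihilation to the closure $\Delta$: the set of roots $\gamma$ with $\g_\gamma\cdot v_0=0$ is closed, because whenever $\gamma_1+\gamma_2\in\Delta$ one has $\g_{\gamma_1+\gamma_2}=[\g_{\gamma_1},\g_{\gamma_2}]$, and a commutator annihilates $v_0$ as soon as both factors do. Consequently $v_0$ is killed by every root space and, being of weight $0$, also by $\te$, so $v_0\in\vlb^\g=\{0\}$ for $\lb\ne 0$.
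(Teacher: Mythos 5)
Your argument is correct and takes essentially the same route as the paper: part (i) is the Bourbaki fact $\vlb^{\g_\gamma}\subset\bigoplus_{(\mu,\gamma)\ge 0}\vlb_\mu$, which you simply reprove via the $\tri$-triple, and for the zero-weight step in (ii) the paper likewise uses $\Ker(e_\gamma\colon\vlb_0\to\vlb_\gamma)=\Ker(e_{-\gamma}\colon\vlb_0\to\vlb_{-\gamma})$ and then notes that $\te$ together with the $\g_{\pm\gamma}$, $\gamma\in\Delta_\h$, generates $\g$ under the closure hypothesis --- which is exactly your root-by-root propagation of the annihilation, phrased as a statement about the generated subalgebra.
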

\begin{proof}
(i) If $\gamma\in \Delta$, then $\vlb^{\g_\gamma}\subset 
\bigoplus_{\mu: \ (\mu,\gamma)\ge 0} \vlb_\mu$, see \cite[Ch.\,VIII, \S\,7]{bour7-8}. Therefore,
\[
   \vlb^{\h}\subset \bigcap_{\gamma\in \Delta_\h}\vlb^{\g_\gamma}
   \subset \bigoplus_{\mu\in \eus C(\h)} \vlb_\mu ,
\]
(ii) If the closure of $\Delta_\h\cup (-\Delta_\h)$ equals $\Delta$, then $\Delta_\h$ contains a basis for
$\eus E$ and hence $\eus C(\h)$ is strictly convex. Therefore, it remains to prove that even if 
$\vlb_0\ne 0$ (i.e., $\lb\in\eus Q$), then still $\vlb_0^{\h}=0$. Indeed, 
\[
  \vlb_0^{\h}= 
  \bigcap_{\gamma\in\Delta_\h}\Ker\bigl(\ad e_\gamma: \vlb_0 \to \vlb_\gamma\bigr).
\]
But it follows from the $\tri$-theory applied to the subalgebra generated by $\g_\gamma$ and 
$\g_{-\gamma}$ that 
\[
      \Ker\bigl( \ad e_\gamma: \vlb_0 \to \vlb_\gamma\bigr) = \Ker\bigl( \ad e_{-\gamma}: \vlb_0 \to \vlb_{-\gamma}\bigr) .
\]
Therefore,  $\vlb_0^{\h}$ is also a fixed point subspace of the subalgebra generated
by $\te$ and all $\g_\gamma, \g_{-\gamma}$ with $\gamma\in\Delta_\h$. The hypothesis on the closure
implies that this subalgebra equals $\g$. Hence
$\vlb_0^{\h}=\vlb^\g=\{0\}$ if $\lb\ne 0$.
\end{proof}

\begin{thm}  \label{thm:main2}
Let $\h$ be an \textsf{ad}-nilpotent subalgebra of $\g$ normalised by $\te$ and $\Delta_\h$ the corresponding set of roots.  
\begin{itemize}
\item[\sf (i)] \ 
Suppose that the closure of $\Delta_\h\cup (-\Delta_\h)$ equals $\Delta$. Then,
for any $\lb\in\fxpl$, the associative algebra  $(\End\vlb)^{\h}$ does not contain non-trivial 
idempotents; hence $\vlb$ is $\h$-indecomposable and thereby  $\h$ is \ri.
\item[\sf (ii)] \ 
Conversely, if\/ $\h$ is \ri, then the closure of $\Delta_\h\cup (-\Delta_\h)$ equals $\Delta$.
\end{itemize}
\end{thm}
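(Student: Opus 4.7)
The plan is to treat the two directions separately, both via the $\te$-weight multigrading \eqref{eq:multigrad} on $(\End\vlb)^\h$, which is available because $[\te,\h]\subset\h$.

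For (i), I would apply Lemma~\ref{lm:no-idemp-multi} directly. Decompose $\End\vlb\simeq\vlb\otimes\vlb^*\simeq\bigoplus_{i=1}^k \sfr(\lb_i)$ with $\lb_i\in\eus Q$, arranging $\lb_1=0$ so that $\sfr(0)=\bbk{\cdot}\mathsf{Id}_\vlb$ is the unique trivial $\g$-summand (by Schur), and $\lb_i\ne 0$ for $i\ge 2$. By Lemma~\ref{lm:conus1}(i), each $\sfr(\lb_i)^\h$ has $\te$-weights lying in the closed cone $\eus C(\h)$, so $\eus P(\vlb,\h)\subset\eus C(\h)$. Under the closure hypothesis, Lemma~\ref{lm:conus1}(ii) ensures that $\eus C(\h)$ is strictly convex and that for every $i\ge 2$ the invariants $\sfr(\lb_i)^\h$ live in weights of $\eus C(\h)\setminus\{0\}$. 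Hence the weight-$0$ component of $(\End\vlb)^\h$ reduces to $\sfr(0)=\bbk{\cdot}\mathsf{Id}_\vlb$, both hypotheses of Lemma~\ref{lm:no-idemp-multi} are verified, and $\vlb$ is $\h$-indecomposable.

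For (ii), I would argue by contrapositive. Assume $\Delta':=\ov{\Delta_\h\cup(-\Delta_\h)}$ is a proper subset of $\Delta$; the goal is to produce a simple $\g$-module that decomposes non-trivially as an $\h$-module. Since $\Delta_\h\cup(-\Delta_\h)$ is symmetric and the closure operation preserves symmetry, $\Delta'$ is closed and symmetric in $\Delta$. Hence $\g':=\te\oplus\bigoplus_{\gamma\in\Delta'}\g_\gamma$ is a proper reductive subalgebra of $\g$ with Cartan $\te$ and root system $\Delta'$, and $\h\subset\g'$. Writing $\g=\bigoplus_j\g^{(j)}$ with respective root subsystems $\Delta^{(j)}$, pick $j_0$ with $\Delta'\cap\Delta^{(j_0)}\subsetneq\Delta^{(j_0)}$ (such a $j_0$ exists, for otherwise $\Delta'\supseteq\bigsqcup_j\Delta^{(j)}=\Delta$). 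View the simple $\g$-module $\VV=\g^{(j_0)}$ (adjoint action on the $j_0$-th ideal, other ideals acting trivially). It restricts to a completely reducible $\g'$-module, since $\g'$ is reductive and its centre, lying in $\te$, acts semisimply on $\VV$. Inside $\VV$ the subspace $\g'\cap\g^{(j_0)}$ is a proper nonzero $\g'$-submodule: nonzero because it contains $\te^{(j_0)}$, proper because $\Delta'\cap\Delta^{(j_0)}\subsetneq\Delta^{(j_0)}$, and $\g'$-invariant because $\g'$ is a subalgebra and $\g^{(j_0)}$ is an ideal of $\g$. A $\g'$-stable complement then exhibits $\VV$ as a non-trivial direct sum of $\g'$-modules, hence of $\h$-modules, contradicting wideness.

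The main obstacle I anticipate lies in part (ii): identifying $\g'$ as reductive relies on the standard Borel--de Siebenthal-type fact that a closed symmetric subset of a root system determines a reductive subsystem subalgebra, and the complete reducibility of $\VV$ as $\g'$-module needs that the centre of $\g'$ act semisimply on $\VV$ (granted, since it sits in $\te$). Part (i), by contrast, is essentially bookkeeping once Lemmas~\ref{lm:conus1} and \ref{lm:no-idemp-multi} are in hand, and it follows the same template as the proofs of Theorems~\ref{thm:main1-pos-grad} and \ref{thm:zge-pos-grad}.
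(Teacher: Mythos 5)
Your proposal is correct and follows essentially the same route as the paper: part (i) is the same combination of the decomposition $\End\vlb\simeq\bigoplus_i\sfr(\lb_i)$ with Lemmas~\ref{lm:conus1} and \ref{lm:no-idemp-multi}, and part (ii) is the same contrapositive via the proper reductive subalgebra $\te\oplus\bigl(\bigoplus_{\gamma\in\tilde\Delta}\g_\gamma\bigr)$ containing $\h$. Your treatment of (ii) is in fact slightly more careful than the paper's one-line argument, since you single out a simple ideal $\g^{(j_0)}$ as the decomposable simple $\g$-module and justify complete reducibility, which is what is needed when $\g$ is semisimple but not simple.
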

\begin{proof}
(i) \ As before, $\End\vlb\simeq\vlb\otimes\vlb^*=\bigoplus_{i=1}^k\sfr(\lb_i)$, where all $\lb_i\in\eus Q\cap\fxpl$, and 
we may assume that $\lb_1=0$, while
$\lb_i\ne 0$ for $i\ge 2$.  Then 
\[
      (\End\vlb)^{\h}=\sfr(0)\oplus \bigl( \bigoplus_{i=2}^k \sfr(\lb_i)^{\h}\bigr) .
\]
By Lemma~\ref{lm:conus1}, each $\sfr(\lb_i)^{\h}$
is $\eus C(\h)$-graded and the component
of grade $0$ is just $\sfr(0)=\bbk{\cdot}\mathsf{Id}_{\vlb}$. 
Because this grading of $(\End\vlb)^{\h}$
is determined by weights of $\te$ and these weights are contained in the strictly convex
cone $\eus C(\h)$, it is an associative algebra grading and 
the only idempotent sitting in $(\End\vlb)^{\h(\Pi')}$ is $\mathsf{Id}_{\vlb}$ 
(see Lemma~\ref{lm:no-idemp-multi}). Thus, $\vlb$ is $\h$-indecomposable, and we are done.

(ii) \ Let $\tilde\Delta$ be the closure of $\Delta_\h\cup (-\Delta_\h)$. Assume that 
$\tilde\Delta\ne \Delta$. Then $\tilde\g:=\te\oplus(\bigoplus_{\gamma\in\tilde\Delta}\g_\gamma)$ is 
a proper reductive subalgebra of $\g$ and $\h\subset\tilde\g$. Hence the simple $\g$-module $\g$ is
decomposable as $\tilde\g$- and $\h$-module.
\end{proof}

In the rest of the section, we consider important examples illustrating Theorem~\ref{thm:main2}.

\begin{ex}[Parabolic subalgebras]  \label{ex:parabolic}
Let $\n$ be the nilradical of a standard parabolic subalgebra $\p$. It is easily seen that if
$\p$ contains no simple ideals of $\g$, then
the closure of $\Delta_\n\cup (-\Delta_\n)$ equals $\Delta$. Therefore, 
Theorem~\ref{thm:main1-pos-grad} follows from Theorem~\ref{thm:main2}(i). But we 
include a separate treatment for the nilpotent radicals, because it does not require multigradings
and yields a more complete information. 
\end{ex}

\begin{ex}[The derived algebra of $\ut^+$]  \label{ex:derived}
For the \textsf{ad}-nilpotent subalgebra $\tilde\ut:=[\ut^+,\ut^+]$, we have 
$\Delta_{\tilde\ut}=\Delta^+\setminus\Pi$. If $\g$ has no simple ideals $\tri$ or $\mathfrak{sl}_3$, then the closure of 
$(\Delta^+\setminus\Pi)\cup (-(\Delta^+\setminus\Pi))$ is $\Delta$. Hence $\tilde\ut$ is \ri\ in all these cases.

By \cite[Sect.\,4]{imrn10},  the cone $\eus C(\tilde\ut)$ is generated by the weights
$\vp_\ap, \vp_\ap-\ap \ (\ap\in\Pi)$; and it also follows from 
\cite[Sect.\,1]{imrn10} that 
$\vlb^{\tilde\ut}$ is positively $\rho^\vee$-graded, where
$\rho^\vee=\frac{1}{2}\sum_{\gamma\in\Delta^+} \gamma^\vee$.
\end{ex}

\begin{ex}[$\Pi$-partition subalgebras]  \label{ex:pi-partition}
Let $\Pi'$ be a subset of $\Pi$. As the following exposition is symmetric with respect to 
$\Pi'$ and $\Pi''=\Pi\setminus\Pi'$, it is convenient to think of it as a partition
$\Pi=\Pi'\sqcup \Pi''$.

A $\Pi$-{\it partition subalgebra\/} of $\g$ is the Lie algebra generated by the root spaces 
$\g_\ap$ ($\ap\in\Pi'$) and $\g_{-\ap}$ ($\ap\in\Pi''$). Write  $\h(\Pi')$ for this subalgebra. 

Here are some simple observations related to these subalgebras:

\textbullet\quad $\dim\h(\Pi')\ge \rk(\g)$ and $\te$ normalises $\h(\Pi')$;

\textbullet\quad $\h(\Pi')\simeq \h(\Pi'')$ (use the Weyl involution of $\g$);

\textbullet\quad $\h(\Pi)=\ut^+$ and $\h(\varnothing)=\ut^-$;

\textbullet\quad The weights $\Pi'\cup (-\Pi'')$ are contained in an open half-space of $\eus E$.

\noindent
The last property implies that $\h(\Pi')$ is contained in a maximal nilpotent  subalgebra of $\g$. 
Hence $\h(\Pi')$ consists of nilpotent  elements and $\Delta_{\h(\Pi')}\cap (-\Delta_{\h(\Pi')})=
\varnothing$. 

Since $\Delta_{\h(\Pi')}\cup (-\Delta_{\h(\Pi')})\supset \Pi\cup (-\Pi)$, the closure of
$\Delta_{\h(\Pi')}\cup (-\Delta_{\h(\Pi')})$ is $\Delta$. Hence Theorem~\ref{thm:main2}(i)
applies here and all $\Pi$-partition subalgebras are \ri.

The most interesting $\Pi$-partition subalgebra occurs if 
the roots in $\Pi'$ are pairwise orthogonal (=\,disjoint on the Dynkin diagram) and the same property also
holds for $\Pi''$. Since the Dynkin diagram is a tree, 
such a partition of $\Pi$ 
is unique, so there are two (isomorphic) respective subalgebras of
$\g$. This partition of $\Pi$ is said to be {\it disjoint} and its parts are denoted by 
$\{\tilde\Pi,\tilde{\tilde\Pi}\}$. 
This discussion yields the following simple but useful assertion.

\begin{prop}
For a partition $\Pi=\Pi'\sqcup \Pi''$, the following conditions are equivalent:

1) \ this partition is disjoint, i.e., $\Pi'=\tilde\Pi$ or $\tilde{\tilde\Pi}$;

2) \ $\dim\h(\Pi')=\rk(\g)$;

3) \ $\h(\Pi')$ is abelian;

4) \ $\Delta_{\h(\Pi')}=\Pi'\cup (-\Pi'')$.
\end{prop}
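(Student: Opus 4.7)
The plan is to show that all four conditions are equivalent to a single auxiliary statement, namely that the chosen generating set
\[
\gS(\Pi')=\{e_\ap : \ap\in\Pi'\}\cup\{e_{-\ap} : \ap\in\Pi''\}
\]
consists of pairwise commuting elements. The analysis rests on one standard observation about simple roots: for distinct $\ap,\beta\in\Pi$, the combination $\ap-\beta$ is never a root, because the simple roots form a basis of the root lattice relative to which every root has coefficients of one sign. Consequently $[e_\ap,e_{-\beta}]=0$ for any distinct simple $\ap,\beta$, so the only brackets among elements of $\gS(\Pi')$ that could be nonzero are
\[
[e_\ap,e_\beta]\in\g_{\ap+\beta}\ (\ap,\beta\in\Pi'\text{ distinct})
\quad\text{and}\quad
[e_{-\ap},e_{-\beta}]\in\g_{-\ap-\beta}\ (\ap,\beta\in\Pi''\text{ distinct}).
\]

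The first step is then to note that each such bracket vanishes if and only if the two simple roots in question are orthogonal. Hence the elements of $\gS(\Pi')$ commute pairwise precisely when every two distinct roots in $\Pi'$ are orthogonal and every two distinct roots in $\Pi''$ are orthogonal. By the definition given in the text, this is exactly condition (1). This proves the equivalence of (1) with the auxiliary commutativity statement.

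Next, I would close the cycle. If the generators commute pairwise, then $\h(\Pi')$ equals the linear span of $\gS(\Pi')$, whose elements lie in distinct root spaces; this immediately yields $\Delta_{\h(\Pi')}=\Pi'\cup(-\Pi'')$, $\dim\h(\Pi')=\#\Pi=\rk(\g)$, and the abelian property, so conditions (2), (3), (4) all hold. Conversely, I would argue by contrapositive: if some pair of generators fails to commute, then without loss of generality there exist distinct non-orthogonal $\ap,\beta\in\Pi'$, and the nonzero element $[e_\ap,e_\beta]\in\g_{\ap+\beta}$ belongs to $\h(\Pi')$. Since $\ap+\beta$ has $\Pi$-height two, it lies neither in $\Pi'\cup(-\Pi'')$ nor among the roots of $\gS(\Pi')$, so $\Delta_{\h(\Pi')}\supsetneq\Pi'\cup(-\Pi'')$ and $\dim\h(\Pi')>\rk(\g)$; moreover $\h(\Pi')$ is not abelian since it contains two non-commuting generators. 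This falsifies each of (2), (3), (4).

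I do not anticipate a real obstacle: the whole argument reduces to the remark about $\ap-\beta$ not being a root plus direct bracket bookkeeping, and the fact that the nilpotent Lie algebra $\h(\Pi')$ is determined by its root set (since it is $\te$-stable and \textsf{ad}-nilpotent, hence has no Cartan component). The one place to exercise a little care is verifying the implication $(4)\Rightarrow(1)$ directly, so that the cycle is genuinely closed; but this is again immediate from the height argument above, since any nonvanishing bracket of generators of the same sign would inject a root of height $\pm 2$ into $\Delta_{\h(\Pi')}$.
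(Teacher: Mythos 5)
Your proof is correct and takes essentially the approach the paper intends: the Proposition is stated there as an immediate consequence of the preceding discussion, and your reduction of all four conditions to pairwise commutativity of the generators --- resting on the fact that $\ap-\beta$ is never a root for distinct $\ap,\beta\in\Pi$, so the only possibly nonzero brackets are $[e_\ap,e_\beta]$ with both roots in the same part, and these vanish exactly when the roots are orthogonal --- is precisely the ``simple'' verification being alluded to. Your explicit check that a non-vanishing bracket contributes a root of height $\pm 2$, which lies outside $\Pi'\cup(-\Pi'')$ and pushes $\dim\h(\Pi')$ above $\rk(\g)$, correctly closes the cycle of implications.
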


In \cite{casati}, it is proved that $\h(\tilde\Pi)$ is \ri\ for $\g=\slno$. But that proof is rather
technical and exploits Littelmann's theory of standard bases for the $\slno$-representations.
Our approach  provides a much simpler proof for a much stronger result (Theorem~\ref{thm:main2}).
\end{ex}

\section{On a general approach to \ri\ subalgebras and indecomposable representations}
\label{sect:general}

\subsection{Simple properties} 
Here we discuss some general properties of \ri\ subalgebras of $\g$ and related problems.

\begin{lm}    \label{lm:simple-prop}
\leavevmode \par
\begin{itemize}  
\item[\sf (i)] \ If\/ $\ah_1\subset\ah_2$ and $\ah_1$ is \ri, then so is $\ah_2$;
\item[\sf (ii)] \ If\/ $\ah\subset \es\subsetneqq \g$ and $\es$ is reductive, then $\ah$ is not \ri;
\item[\sf (iii)] \ If\/ $\ah$ is \ri, then $\z_\g(\ah)$ is an \textsf{ad}-nilpotent subalgebra.
\end{itemize}
\end{lm}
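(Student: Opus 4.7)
The plan is to dispose of (i) directly from the definitions, derive (ii) from complete reducibility of $\es$-modules, and reserve the substantive work for (iii). For (i), any $\ah_2$-stable direct sum decomposition of a simple $\g$-module $\VV$ is a fortiori $\ah_1$-stable since $\ah_1\subset\ah_2$; hence if $\ah_1$ is \ri, then so is $\ah_2$.

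For (ii), I plan to exhibit a single simple $\g$-module $\VV$ that decomposes nontrivially as an $\es$-module, since then the same decomposition is automatically $\ah$-stable. Complete reducibility of $\es$-modules then furnishes the splitting, provided $\VV|_\es$ is reducible. When $\g$ is simple the adjoint module $\VV=\g$ already works, because $\es$ itself is a proper nonzero $\es$-submodule; for general semisimple $\g$, one finds such a $\VV$ by a standard Peter--Weyl argument applied to the proper reductive subgroup corresponding to $\es$.

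For (iii), I argue by contrapositive. Suppose some $x\in\z_\g(\ah)$ is not $\ad$-nilpotent, and let $x=x_s+x_n$ be its abstract Jordan decomposition in $\g$. Because $\ad x$ annihilates $\ah$, the value $0$ is an eigenvalue of $\ad x$, so $\ad x_s$ and $\ad x_n$ are polynomials in $\ad x$ with no constant term; both therefore annihilate $\ah$, yielding $x_s,x_n\in\z_\g(\ah)$ with $s:=x_s$ nonzero semisimple. Since $s\ne 0$ in the semisimple Lie algebra $\g=\bigoplus_j\g^{(j)}$, $s$ has nonzero projection $s^{(j)}$ to some simple factor, and picking a Cartan of $\g^{(j)}$ containing $s^{(j)}$ yields a root $\ap$ with $\ap(s)=\ap(s^{(j)})\ne 0$. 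Taking $\VV=\g^{(j)}$ (a simple $\g$-module), the element $s$ acts on $\VV$ with at least the two distinct eigenvalues $0$ and $\ap(s)$; and since $[s,\ah]=0$, the $s$-eigenspace decomposition of $\VV$ is a nontrivial $\ah$-module decomposition, contradicting wideness of $\ah$. The main delicate step here is the Jordan-decomposition argument ensuring $s\in\z_\g(\ah)$; this hinges on the standard fact that $\ad x_s\in\bbk[\ad x]$ with no constant term, after which the eigenspace argument is direct.
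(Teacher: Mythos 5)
Your parts (i) and (ii) match the paper's own proof: (i) is declared obvious there, and (ii) is the same one-line appeal to the decomposability of the adjoint module (like the paper, you are implicitly reading ``reductive'' as reductive \emph{in} $\g$, which is what makes complete reducibility of the restriction of $\ad$ to $\es$ available; your remark about non-simple $\g$ is a sensible patch that the paper glosses over). The genuine divergence is in (iii). Both arguments reduce to two facts: $\z_\g(\ah)$ contains the semisimple part of each of its elements, and it contains no nonzero semisimple element. For the first, the paper invokes the algebraicity of centralisers (\cite[7.4]{bor69}) to conclude that $\z_\g(\ah)$ is closed under taking semisimple parts, whereas you use the elementary fact that $\ad x_s=(\ad x)_s$ is a polynomial in $\ad x$ without constant term; your version is self-contained and avoids the external citation. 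For the second, the paper feeds a semisimple $s\in\z_\g(\ah)$ back into part (ii) via $\ah\subset\z_\g(s)\subsetneqq\g$, while you re-prove the needed special case directly: the $\ad s$-eigenspace decomposition of a suitable simple ideal $\g^{(j)}$ is a nontrivial $\ah$-stable splitting because $[s,\ah]=0$. Both routes are correct; the paper's is shorter because it recycles (ii), while yours is more explicit and works uniformly in the semisimple (non-simple) case by passing to a simple ideal rather than to the full adjoint module.
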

\begin{proof}
(i) \ Obvious.

(ii) \  The simple $\g$-module $\g$ is decomposable as $\es$-module.

(iii) \ 
If $s\in\z_\g(\ah)$ is semisimple, then $\ah$ is contained in the reductive subalgebra
$\z_\g(s)$, hence $\ah$ is not \ri. That is, $\z_\g(\ah)$ does not contain semisimple elements.
As $\z_\g(\ah)$ is an algebraic Lie algebra \cite[7.4]{bor69}, it contains the semisimple part of every
element. Therefore, $\z_\g(\ah)$ must contain only nilpotent elements.
\end{proof}

All \ri\ subalgebras occurring in Section~\ref{sect:pi-part} are of dimension at least $\rk(\g)$
(see also Remark~\ref{rmk:min-nil-rad}), and the same is true for the nilpotent radicals of centralisers of non-degenerate nilpotent elements, see Section~\ref{sect:reg-nilp}.
Moreover, in both cases, the subalgebras of dimension $\rk(\g)$ are necessarily abelian.
A partial explanation is given by 

\begin{lm}
Suppose that $\ah$ is \ri\ and regular. Then $\dim\ah \ge\rk(\g)$ and if\/ $\dim\ah =\rk(\g)$, then
$\ah$ is \textsf{ad}-nilpotent and abelian.
\end{lm}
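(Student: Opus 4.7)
The plan is to exploit the regularity of $\ah$ to obtain a root-space decomposition, and then extract the dimension bound from Lemma~\ref{lm:simple-prop}(iii) applied to $\z_\g(\ah)$.

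First, since $\ah$ is normalised by $\te$, it decomposes into $\te$-weight spaces: $\ah=\ah_0\oplus\bigoplus_{\gamma\in\Delta_\ah}\g_\gamma$, where $\ah_0=\ah\cap\te$ and $\Delta_\ah\subset\Delta$. I will not assume at the outset that $\ah_0=0$ or that $\Delta_\ah$ is one-sided; these will come out of the argument when $\dim\ah=\rk(\g)$.

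Next I would observe that, by Lemma~\ref{lm:simple-prop}(iii), $\z_\g(\ah)$ is \textsf{ad}-nilpotent, hence contains no nonzero semisimple element (a semisimple element acting nilpotently must be $0$). In particular $\z_\g(\ah)\cap\te=0$. On the other hand, for $t\in\te$ one has $[t,\ah_0]=0$ automatically and $[t,e_\gamma]=\gamma(t)e_\gamma$, so $\z_\g(\ah)\cap\te=\{t\in\te\mid \gamma(t)=0\ \forall\gamma\in\Delta_\ah\}$. Vanishing of this subspace is equivalent to saying that $\Delta_\ah$ spans $\te^*$, which forces $|\Delta_\ah|\ge\rk(\g)$; combined with $\dim\ah=\dim\ah_0+|\Delta_\ah|$, this yields $\dim\ah\ge\rk(\g)$.

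For the equality case, $\dim\ah=\rk(\g)$ forces both $\ah_0=0$ and $\Delta_\ah$ to be a basis of $\te^*$. I would first rule out $\gamma,-\gamma\in\Delta_\ah$: that would put $[\g_\gamma,\g_{-\gamma}]=\bbk h_\gamma$ inside $\ah\cap\te=0$, which is absurd. Hence $\Delta_\ah\cap(-\Delta_\ah)=\varnothing$ and $\ah$ is \textsf{ad}-nilpotent. To prove abelianness, for any $\gamma_1,\gamma_2\in\Delta_\ah$ with $\gamma_1{+}\gamma_2\in\Delta$, closedness of $\Delta_\ah$ (which follows from $\ah$ being a subalgebra) would force $\gamma_1{+}\gamma_2\in\Delta_\ah$, contradicting linear independence of the basis $\Delta_\ah$. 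Therefore $[\g_{\gamma_1},\g_{\gamma_2}]=0$ for all such pairs, so $\ah$ is abelian.

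There is no serious obstacle: the whole argument is essentially linear algebra on weight-space decompositions once one has Lemma~\ref{lm:simple-prop}(iii). The only place one needs to be a bit careful is in translating ``no semisimple elements in $\z_\g(\ah)$'' into the spanning condition on $\Delta_\ah$, which rests on the routine identification of $\z_\g(\ah)\cap\te$ with the common kernel of the roots in $\Delta_\ah$.
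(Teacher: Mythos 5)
Your proof is correct and follows essentially the same route as the paper: decompose $\ah$ into $\te$-weight spaces, observe that if $\Delta_\ah$ does not span $\te^*$ then $\z_\g(\ah)$ contains a nonzero element of $\te$ (contradicting wideness via Lemma~\ref{lm:simple-prop}), and in the equality case use linear independence together with closedness of $\Delta_\ah$ to get \textsf{ad}-nilpotency and abelianness. Your write-up is somewhat more detailed than the paper's (which compresses the equality case into ``a similar argument shows''), but the ideas coincide.
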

\begin{proof}
If $[\te,\ah]\subset \ah$, then $\ah=\tilde\te\oplus(\bigoplus_{\gamma\in\Delta_\ah}\g_\gamma)$, where $\tilde\te$ is a subspace of $\te$ such that  $\tilde\te\supset  \Delta_\ah\cap(-\Delta_\ah)$
(in the last embedding we identify $\te$ and $\te^*$).
If $\dim\ah< \rk(\g)$, then $\#\Delta_\ah < \rk(\g)$ and
$\z_\g(\ah)$ certainly contains a nonzero element of $\te$, i.e., $\ah$ cannot be \ri.
If $\dim\ah =\rk(\g)$, then a similar argument shows that we must have  $\#\Delta_\ah = \rk(\g)$,
the elements of $\Delta_\ah$ are linearly independent and $\tilde\te=0$. Moreover, since
$\Delta_\ah$ is linearly independent and closed, $\ah$ is abelian.
\end{proof}
However,  $\rk(\g)$ provides the strict lower bound only  for the dimension of {\bf regular} wide 
subalgebras of $\g$. We prove below that every simple Lie algebra has a wide commutative
subalgebra of dimension~2.
Note also that it may happen that $\ah$ is not \ri, but there still exist families of 
$\ah$-inde\-com\-posable simple $\g$-modules. Here is a sample reason for such phenomenon.

\begin{lm}   \label{prop:a-in-s-in-g}
Let $\tilde\g\subset\g$ be a proper semisimple subalgebra and $\ah$ is wide in $\tilde\g$.
Suppose that a simple $\g$-module $\vlb$ remains simple as $\tilde\g$-module. Then
$\vlb$ is $\ah$-indecomposable.
\end{lm}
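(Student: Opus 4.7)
The plan is to exploit the fact that $\ah$-indecomposability is an intrinsic property of the $\ah$-action on a vector space and does not depend on any ambient Lie algebra through which $\ah$ happens to act. Concretely, a direct-sum decomposition $\vlb = V_1 \oplus V_2$ into proper $\ah$-submodules refers only to the $\ah$-module structure on $\vlb$; whether we view $\vlb$ as a $\g$-module or as a $\tilde\g$-module via restriction is immaterial, because in both cases the $\ah$-action is literally the same.

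First I would record this observation precisely: since $\ah \subset \tilde\g \subset \g$, the $\ah$-submodules of $\vlb$ coincide whether $\vlb$ is regarded as $\g$-module or as $\tilde\g$-module. Consequently, ``$\vlb$ is $\ah$-indecomposable as a $\g$-module'' is equivalent to ``$\vlb$ is $\ah$-indecomposable as a $\tilde\g$-module''. Next, I would invoke the hypotheses: $\vlb$ is simple as a $\tilde\g$-module (this is given), and $\ah$ is wide in $\tilde\g$, so by the very definition of wideness applied inside $\tilde\g$, every simple $\tilde\g$-module is $\ah$-indecomposable. Applying this to $\vlb$ gives the desired conclusion.

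There is essentially no obstacle here; the lemma is a direct unpacking of the definitions, and the content lies entirely in the formulation --- it is the mechanism by which wideness of $\ah$ inside a proper semisimple subalgebra $\tilde\g$ still produces families of $\ah$-indecomposable simple $\g$-modules, namely those $\vlb$ whose restriction to $\tilde\g$ remains simple. Equivalently, one can phrase the proof via the associative-algebra criterion used throughout the paper: the inclusion of invariants $(\End\vlb)^{\tilde\g} \subseteq (\End\vlb)^{\ah}$ combined with the absence of nontrivial idempotents in $(\End\vlb)^{\ah}$ when $\vlb|_{\tilde\g}$ is simple (so that $(\End\vlb)^{\tilde\g} = \bbk\cdot \mathsf{Id}_\vlb$ is automatic for the trivial summand, while the wideness of $\ah$ in $\tilde\g$ rules out idempotents in the full $(\End\vlb)^{\ah}$) yields the same conclusion.
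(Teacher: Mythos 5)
Your argument is correct and is exactly the intended one: the paper states this lemma without any proof precisely because it is the immediate unpacking of definitions you give --- $\ah$-indecomposability of $\vlb$ depends only on the $\ah$-action, $\vlb$ is a simple $\tilde\g$-module by hypothesis, and wideness of $\ah$ in $\tilde\g$ then applies directly. Nothing is missing.
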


\subsection{Wide subalgebras and epimorphic subgroups} 
A subgroup $H\subset G$ is said to be {\it epimorphic}, if $\bbk[G]^H=\bbk$. Equivalently, $H$ is 
epimorphic if $\vlb^H=\{0\}$ unless $\lb=0$ \cite{bb92}. One easily proves that $H$ is epimorphic if 
and only if the identity component of $H$ is. Therefore, we may say that a subalgebra $\h$ is  
{\it epimorphic\/} if $\h=\Lie(H)$ and $H$ is epimorphic in the above sense.

By \cite[Theorem\,1]{bb92}, $\h$ is epimorphic if and only
if the following condition holds: {\sl If\/ $\VV$ is a $\g$-module and\/ $\VV=V_1\oplus V_2$ is a sum 
of\/ $\h$-modules, then $V_1$ and $V_2$ are actually $\g$-invariant.}
Compare this with the definition of a wide subalgebra, which requires indecomposability only for the simple $\g$-modules!

This implies that any epimorphic subalgebra is wide. Alternatively, one may notice that if 
$\h$ is epimorphic, then $(\End\vlb)^\h=\bbk{\cdot}\mathsf{Id}_{\vlb}$ for all $\lb\in\fxpl$ 
and hence $\h$ is \ri. 
There is a close relationship between regular wide and epimorphic subalgebras.

\begin{prop}
Suppose that the subalgebra $\h\subset\g$ is \textsf{ad}-nilpotent and $[\te,\h]\subset\h$. Then\/ $\h\oplus\te$ is epimorphic if and only if\/ $\h$ is wide.
\end{prop}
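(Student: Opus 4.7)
The plan is to pass through the reductive subalgebra $\tilde\g := \te \oplus \bigoplus_{\gamma \in \tilde\Delta} \g_\gamma$, where $\tilde\Delta$ denotes the closure of $\Delta_\h \cup (-\Delta_\h)$ in $\Delta$. Since $\tilde\Delta$ is a closed symmetric subset, it is a root subsystem, and so $\tilde\g$ is reductive of maximal rank; moreover, the corresponding connected subgroup $\tilde G \subset G$ contains the maximal torus $T$ with $\Lie T = \te$, as well as the subgroup with Lie algebra $\h \oplus \te$.

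The first step identifies the invariants. For every simple $\g$-module $\vlb$, I claim that
\[
   \vlb^{\h \oplus \te} \;=\; \vlb^\h \cap \vlb_0 \;=\; \vlb^{\tilde\g},
\]
where $\vlb_0$ is the zero weight space. The first equality uses $\vlb^\te = \vlb_0$. The second is the $\tri$-theoretic observation already exploited in the proof of Lemma~\ref{lm:conus1}(ii): on a weight-zero vector the triple $\{e_\gamma,h_\gamma,e_{-\gamma}\}$ acts trivially, so being killed by $e_\gamma$ is equivalent to being killed by $e_{-\gamma}$. Consequently $\vlb_0 \cap \vlb^\h$ is fixed by the subalgebra generated by $\te$ and the root spaces $\g_{\pm\gamma}$, $\gamma \in \Delta_\h$, which is precisely $\tilde\g$; the reverse inclusion is obvious. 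Hence $\h \oplus \te$ is epimorphic if and only if $\vlb^{\tilde\g} = 0$ for every nonzero $\lb \in \fxpl$, i.e.\ if and only if $\tilde G$ is epimorphic in $G$.

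Next, Theorem~\ref{thm:main2} says $\h$ is \ri\ iff $\tilde\Delta = \Delta$, iff $\tilde\g = \g$, iff $\tilde G = G$. The proposition thereby reduces to the claim that a connected reductive subgroup of $G$ containing a maximal torus is epimorphic iff it equals $G$. The $(\Leftarrow)$ direction is trivial. For $(\Rightarrow)$, if $\tilde G \subsetneq G$, then by Matsushima's theorem $G/\tilde G$ is an affine variety of positive dimension, so $\bbk[G]^{\tilde G} = \bbk[G/\tilde G]$ contains a non-constant $\tilde G$-invariant $f$. The finite-dimensional $G$-submodule $\mathrm{span}(G{\cdot}f) \subset \bbk[G]$ decomposes via Peter--Weyl into simple $G$-modules, and the non-constancy of $f$ forces some simple summand with highest weight $\lb \ne 0$ to carry a nonzero $\tilde G$-invariant; so $\tilde G$ is not epimorphic.

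The one genuinely non-formal step is this final implication, which relies on Matsushima's affineness criterion and the Peter--Weyl decomposition. Everything else follows formally from Theorem~\ref{thm:main2} and the $\tri$-calculation in the proof of Lemma~\ref{lm:conus1}(ii).
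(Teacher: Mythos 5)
Your proof is correct, but it takes a genuinely different route from the paper's. The paper disposes of both directions in two lines by quoting Pommerening's Korollar 3.6, which says precisely that the regular solvable subalgebra $\h\oplus\te$ is epimorphic if and only if the closure $\tilde\Delta$ of $\Delta_\h\cup(-\Delta_\h)$ is all of $\Delta$; combined with Theorem~\ref{thm:main2} this gives the proposition immediately. You instead reprove the relevant case of Pommerening's criterion from scratch: the identification $\vlb^{\h\oplus\te}=\vlb_0\cap\vlb^\h=\vlb^{\tilde\g}$ via the $\tri$-argument already present in the proof of Lemma~\ref{lm:conus1}(ii), followed by the observation that a proper connected reductive subgroup of maximal rank cannot be epimorphic (Matsushima plus Peter--Weyl, or, even more cheaply, the decomposition $\g=\tilde\g\oplus\m$ of the adjoint module into nonzero $\tilde G$-submodules together with the decomposition characterisation of epimorphic subgroups from \cite{bb92}). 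What your route buys is self-containedness --- the external reference to \cite{pomm} is replaced by tools the paper already deploys --- at the cost of a somewhat longer argument and the appeal to Matsushima's affineness criterion. One phrasing slip: the triple $\{e_\gamma,h_\gamma,e_{-\gamma}\}$ does \emph{not} act trivially on an arbitrary weight-zero vector; the correct statement, which you do use in the next clause and which is exactly what Lemma~\ref{lm:conus1}(ii) establishes, is that $h_\gamma$ kills such a vector and therefore $e_\gamma$ kills it if and only if $e_{-\gamma}$ does. This does not affect the validity of the argument.
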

\begin{proof}
Here $\h$ is the nilpotent radical of the regular solvable subalgebra $\tilde\h=\h\oplus\te$. 
\\
By \cite[Korollar\,3.6]{pomm}, if $\tilde\h$ is epimorphic, then the closure
of $\Delta_\h\cup(-\Delta_\h)$ is $\Delta$. Hence $\h$ is wide in view of Theorem~\ref{thm:main2}(i).
\\
Conversely, if $\h$ is wide, then the closure of $\Delta_\h\cup(-\Delta_\h)$ is $\Delta$ according 
to Theorem~\ref{thm:main2}(ii), and again \cite[Korollar\,3.6]{pomm} shows that $\h\oplus\te$ 
is epimorphic.
\end{proof}

Any simple Lie algebra contains a three-dimensional solvable epimorphic subalgebra 
(see~\cite[n.\,5(b)]{bb92}), but this subalgebra is neither regular nor \textsf{ad}-nilpotent.
Below, we recall the construction and show that the two-dimensional nilradical of that subalgebra is \ri.

\begin{ex}     \label{ex:wide-2-dim}
Let $\es:=\langle e,h,f\rangle$ be a principal $\tri$-subalgebra of a simple Lie algebra $\g$. Then 
$\es$ is not contained in a proper regular semisimple subalgebra of $\g$ \cite[Theorem\,9.1]{dy52a}. 
Actually, $\es$ is either a maximal semisimple subalgebra, or is contained in a unique maximal proper 
semisimple subalgebra $\tilde\g$ of $\g$, see  \cite{dy52b} for the classical Lie algebras and 
\cite{dy52a} for the exceptional algebras. For instance, if $\g$ is of type $\GR{E}{6}$, then
$\tilde\g$ is of type $\GR{F}{4}$, whereas for all other exceptional algebras, one has $\es=\tilde\g$ 
\cite[Theorem\,15.2]{dy52a}. By Lemma~\ref{lm:generated}, $\tilde\g$ cannot contain the whole of 
$\g^e=\zgen$.
Therefore, one can pick an $h$-eigenvector  $\tilde e\in\g^e$ 
such that $\es$ and $\tilde e$ generate the whole of $\g$. In other words, $f$ and the commutative
subalgebra $\ah=\langle e,\tilde e\rangle$ generate the whole of $\g$. Applying then 
Proposition~\ref{prop:even-normal} and Theorem~\ref{thm:zge-pos-grad} to $\ah$ 
(in place of $\zgen$), we conclude that $\ah$ is wide. Here $\langle h,e,\tilde e\rangle$ is an 
epimorphic subalgebra of $\g$ described in \cite{bb92}, and $\ah$ is its nilradical.
\end{ex}
This prompts the following 
\begin{quest}  \label{vopr2}
Let $\ah$ be an epimorphic algebraic subalgebra of $\g$. Is it true that the nilpotent radical 
$\ah_{nil}$ is \ri?  \end{quest}

\subsection{Example: the euclidean Lie algebra $\ee_3$} 
Following \cite{duglas}, we denote by $\ee_3$ the semi-direct product 
$\mathfrak{so}_3\ltimes \bbk^3\simeq \tri\ltimes\tri$. It is proved in~\cite{duglas} that, for
a certain embedding $\ee_3\subset \mathfrak{sl}_4$, the simple $\mathfrak{sl}_4$-modules
$\sfr(m\vp_1)$ and $\sfr(m\vp_3)$ are $\ee_3$-indecomposable for all $m\in\BN$, whereas 
$\sfr(\vp_2)$ and $\sfr(2\vp_2)$ are not. [We use the obvious numbering of the fundamental 
weights of $\mathfrak{sl}_4$.] To illustrate the usefulness of our methods, 
we provide a simpler derivation (and a generalisation) of those results.

The embedding $\ee_3\subset \mathfrak{sl}_4$ is given by Equations (3.1) and (4.1) in~\cite{duglas}.
Making a suitable permutation of the corresponding basis vectors of $\bbk^4$, one easily finds that 
$\ee_3$
can be regarded as the subalgebra 
\beq   \label{eq:imbed-e3}
     \left\{ \begin{pmatrix} A & B\\ 0 & A \end{pmatrix} \mid A,B\in \tri \right\} \subset \mathfrak{sl}_4 .
\eeq
Let $\psi$ be the skew-symmetric bilinear form on $\bbk^4$ with the matrix
\[
 \Psi=\begin{pmatrix} 0 & 0 & 0 & 1 \\ 0 & 0 & -1 & 0 \\ 0 & 1 & 0 & 0\\ -1 & 0 & 0 & 0
  \end{pmatrix}
\]
and let $\mathfrak{sp}_4$ denote the stabiliser of the form $\psi$. That is,
$
      \mathfrak{sp}_4=\{ g\in \mathfrak{sl}_4 \mid \Psi g+g^t\Psi=0\} 
$.
Then one readily verifies that $\ee_3\subset \mathfrak{sp}_4$. Hence $\ee_3$ is not \ri\ in $\mathfrak{sl}_4$, in view 
of Lemma~\ref{lm:simple-prop}(ii). However, 

\begin{lm}        \label{lm:wide-in-sp}
 $\ee_3$ is  \ri\ in $\mathfrak{sp}_4$.
\end{lm}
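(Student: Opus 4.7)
The plan is to apply Lemma~\ref{lm:simple-prop}(i) by showing that a suitable subalgebra of $\ee_3$ is already \ri\ in $\mathfrak{sp}_4$. The obvious candidate is the $3$-dimensional abelian nilradical
\[
    \n = \left\{ \begin{pmatrix} 0 & B \\ 0 & 0 \end{pmatrix}: B\in\tri \right\}
\]
of $\ee_3$, and the strategy is to check that $\n$ is regular \textsf{ad}-nilpotent in $\mathfrak{sp}_4$, identify $\Delta_\n$, and invoke Theorem~\ref{thm:main2}(i).

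First I would fix the Cartan subalgebra $\te = \{\mathrm{diag}(t_1,t_2,-t_2,-t_1)\}$ of $\mathfrak{sp}_4$ and verify by a block computation that
$\bigl[t,\begin{pmatrix} 0 & B \\ 0 & 0 \end{pmatrix}\bigr] = \begin{pmatrix} 0 & B' \\ 0 & 0 \end{pmatrix}$ with $B' = \mathrm{diag}(t_1,t_2)\,B + B\,\mathrm{diag}(t_2,t_1)\in\tri$. Thus $[\te,\n]\subset\n$, and $\n$ is a regular \textsf{ad}-nilpotent subalgebra of $\mathfrak{sp}_4$. Running $B$ over the standard basis $e_{\tri},h_{\tri},f_{\tri}$ of $\tri$ produces $\te$-weight vectors with respective weights $2\esi_1$, $\esi_1+\esi_2$, and $2\esi_2$ (in the usual $\esi_1,\esi_2$ coordinates on $\te^*$), so
\[
    \Delta_\n = \{\,2\esi_1,\ \esi_1+\esi_2,\ 2\esi_2\,\},
\]
that is, exactly the three positive long roots of $\mathfrak{sp}_4$.

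To apply Theorem~\ref{thm:main2}(i) it remains to check that the closure of $\Delta_\n\cup(-\Delta_\n)$ is all of $\Delta(\mathfrak{sp}_4)$. The only roots not already present are the short ones $\pm(\esi_1-\esi_2)$, and they arise from
\[
    (\esi_1+\esi_2) + (-2\esi_2) = \esi_1-\esi_2,
    \qquad
    -(\esi_1+\esi_2) + 2\esi_2 = -(\esi_1-\esi_2),
\]
each a sum of two elements of $\Delta_\n\cup(-\Delta_\n)$ that lies in $\Delta$. Theorem~\ref{thm:main2}(i) then yields that $\n$ is \ri\ in $\mathfrak{sp}_4$, and since $\n\subset\ee_3$, Lemma~\ref{lm:simple-prop}(i) promotes this to $\ee_3$. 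The only point where care is needed is the identification of the three $\te$-weights of $\n$ under the non-standard symplectic form $\psi$; once the $\te$-stability of $\n$ has been confirmed, this reduces to inspecting the nonzero entries of the matrices $e_{14}$, $e_{13}-e_{24}$, and $e_{23}$.
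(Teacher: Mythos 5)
Your argument is correct and is essentially the paper's own: the subalgebra $\n$ you use (the matrices with $A=0$) is exactly the nilradical $\p(2)_{nil}$ of the parabolic of $\mathfrak{sp}_4$ attached to the long simple root $\tilde\ap_2$, the paper invokes Theorem~\ref{thm:main1-pos-grad} for it where you verify the closure condition of Theorem~\ref{thm:main2}(i) by hand (Example~\ref{ex:parabolic} notes these amount to the same thing), and both proofs then conclude via Lemma~\ref{lm:simple-prop}(i). One small slip that does not affect the computation: $\esi_1+\esi_2$ is a \emph{short} root of $\mathfrak{sp}_4$, so $\Delta_\n=\{2\esi_1,\esi_1+\esi_2,2\esi_2\}$ is not ``the three positive long roots'' but rather the three positive roots of positive $\tilde\ap_2$-height.
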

\begin{proof}
Let $\tilde\ap_1,\tilde\ap_2$ be the simple roots of $\mathfrak{sp}_4$ ($\tilde\ap_1$ is short) and $\p(2)$ the parabolic subalgebra of $\mathfrak{sp}_4$ corresponding to $\Pi'=\{\tilde\ap_2\}$ (see notation of
Section~\ref{sect:radical}). Then $\ee_3$ is of codimension 1 in $\p(2)$ and 
$\ee_3\supset \p(2)_{nil}$.  In the above Eq.~\eqref{eq:imbed-e3}, 
$\p(2)_{nil}$ is the set of matrices with ${A=0}$.
By Theorem~\ref{thm:main1-pos-grad} and Lemma~\ref{lm:simple-prop}(i), we conclude that 
$\ee_3$ is  \ri\ in $\mathfrak{sp}_4$.
\end{proof}

\begin{thm}
The simple $\mathfrak{sl}_4$-module $\vlb$ is $\ee_3$-indecomposable if and only if 
$\lb\in\{m\vp_1, m\vp_3\}$ with any $m\in\BN$.
\end{thm}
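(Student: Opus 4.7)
The plan is to exploit the chain $\ee_3\subset\mathfrak{sp}_4\subset\mathfrak{sl}_4$, combining Lemma~\ref{lm:wide-in-sp} (wideness of $\ee_3$ in $\mathfrak{sp}_4$) with Lemma~\ref{prop:a-in-s-in-g}.

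For the ``if'' direction, I would observe that the natural $4$-dimensional $\mathfrak{sl}_4$-module $\bbk^4=\sfr(\vp_1)$ restricts to the natural simple $\mathfrak{sp}_4$-module, so $\sfr(m\vp_1)=S^m\bbk^4$ remains simple as an $\mathfrak{sp}_4$-module (namely $\sfr_{\mathfrak{sp}_4}(m\tilde\vp_1)$, with $\tilde\vp_1$ the short fundamental weight). The symplectic form on $\bbk^4$ yields an $\mathfrak{sp}_4$-isomorphism $\bbk^4\cong(\bbk^4)^*$, so $\sfr(m\vp_3)=S^m(\bbk^4)^*$ is likewise simple as $\mathfrak{sp}_4$-module. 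Lemma~\ref{prop:a-in-s-in-g}, applied with $\tilde\g=\mathfrak{sp}_4$ and $\ah=\ee_3$, then gives that $\sfr(m\vp_1)$ and $\sfr(m\vp_3)$ are $\ee_3$-indecomposable.

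For the converse, suppose $\lb=a\vp_1+b\vp_2+c\vp_3$ is not of the listed form. I would show that $\sfr(\lb)\vert_{\mathfrak{sp}_4}$ is not a simple $\mathfrak{sp}_4$-module; by complete reducibility of $\mathfrak{sp}_4$-modules, it then splits into a nontrivial direct sum of $\mathfrak{sp}_4$-submodules, and since $\ee_3\subset\mathfrak{sp}_4$ this is in particular a nontrivial $\ee_3$-submodule decomposition, so $\sfr(\lb)$ is $\ee_3$-decomposable. To detect non-simplicity as $\mathfrak{sp}_4$-module, the natural tool is the exceptional isomorphism pair $\mathfrak{sl}_4\cong\fso_6$ (sending $\vp_1,\vp_3$ to the two half-spin fundamental weights and $\vp_2$ to the vector fundamental weight) and $\mathfrak{sp}_4\cong\fso_5$, together with the classical branching rule for $\fso_6\downarrow\fso_5$: a simple $\fso_6$-module with highest weight $(\lb_1,\lb_2,\lb_3)$ (in the orthonormal $\esi$-basis) decomposes into simple $\fso_5$-modules of highest weights $(\mu_1,\mu_2)$ ranging over $\lb_1\ge\mu_1\ge\lb_2\ge\mu_2\ge|\lb_3|$ (with matching integrality/half-integrality). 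A single summand appears iff $\lb_1=\lb_2=|\lb_3|$. Writing the $\mathfrak{sl}_4$-weight $\lb$ in $\fso_6$-coordinates yields $(\lb_1,\lb_2,\lb_3)=\bigl(\tfrac{a+c}{2}+b,\ \tfrac{a+c}{2},\ \tfrac{a-c}{2}\bigr)$, so the conditions $\lb_1=\lb_2$ and $\lb_2=|\lb_3|$ collapse to $b=0$ and $\min(a,c)=0$, i.e., exactly $\lb\in\{m\vp_1,m\vp_3\}$.

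The main obstacle is the translation step: pinning down how the $\mathfrak{sl}_4$-fundamental weights correspond to $\fso_6$-highest-weight coordinates, and verifying that the $\fso_5\subset\fso_6$ appearing in the branching rule matches, under the exceptional isomorphism, the specific $\mathfrak{sp}_4\subset\mathfrak{sl}_4$ fixed by Equation~\eqref{eq:imbed-e3}. Both inclusions arise as stabilisers (of a nonzero vector, respectively of a non-degenerate symplectic form) and are unique up to conjugacy, so this identification goes through once the weight dictionary is written out carefully.
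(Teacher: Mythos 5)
Your proof is correct, and its skeleton is the same as the paper's: both directions reduce to deciding when $\vlb$ stays irreducible upon restriction to $\mathfrak{sp}_4$, and the ``if'' direction then follows from Lemma~\ref{lm:wide-in-sp} together with Lemma~\ref{prop:a-in-s-in-g}, exactly as in the paper (your identification of $\sfr(m\vp_1)=S^m\bbk^4$ and $\sfr(m\vp_3)=S^m(\bbk^4)^*$ with the irreducible $\mathfrak{sp}_4$-modules $\sfr(m\tilde\vp_1)$ is the standard fact the paper also invokes). The one place you diverge is the converse: the paper detects reducibility of $\vlb\vert_{\mathfrak{sp}_4}$ for $\lb\notin\{m\vp_1,m\vp_3\}$ by noting that the restriction contains the $\mathfrak{sp}_4$-constituent of highest weight $(a_1+a_3)\tilde\vp_1+a_2\tilde\vp_2$ and comparing dimensions via Weyl's formula (or, alternatively, by citing Dynkin's classification of restrictions that remain irreducible), whereas you pass through the exceptional isomorphisms $\mathfrak{sl}_4\simeq\fso_6$ and $\mathfrak{sp}_4\simeq\fso_5$ and use the interlacing branching rule for $\fso_6\downarrow\fso_5$. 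Your weight dictionary $(\lb_1,\lb_2,\lb_3)=\bigl(\frac{a+c}{2}+b,\ \frac{a+c}{2},\ \frac{a-c}{2}\bigr)$ is correct ($\vp_1,\vp_3$ go to the two half-spin weights, $\vp_2$ to the vector weight), the single-summand criterion $\lb_1=\lb_2=|\lb_3|$ does collapse to $b=0$ and $\min(a,c)=0$, and the matching of the two realisations of $\mathfrak{sp}_4\simeq\fso_5$ is harmless since this subalgebra is unique up to conjugacy. The branching-rule route costs you the translation step but buys slightly more than the paper's dimension count, namely the complete list of $\mathfrak{sp}_4$-constituents of $\vlb$; both arguments are complete.
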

\begin{proof}
As is well-known, the simple $ \mathfrak{sl}_4$-modules  $\sfr(m\vp_1)$ and $\sfr(m\vp_3)$
remain simple as $\mathfrak{sp}_4$-modules. Hence they are $\ee_3$-indecomposable.
On the other hand, all other simple $ \mathfrak{sl}_4$-modules are decomposable as 
$\mathfrak{sp}_4$-modules. This can be verified using Weyl's dimension 
formula~\cite[Ch.\,VIII, \S\,9.2]{bour7-8}.
Namely, if $\lb=a_1\vp_1+a_2\vp_2+a_3\vp_3$, then $\vlb\vert_{\mathfrak{sp}_4}$ contains
the simple $\mathfrak{sp}_4$-module with highest weight $\tilde\lb=(a_1+a_3)\tilde\vp_1+a_2\tilde\vp_2$.  Then Weyl's formula shows that 
$\dim\vlb >  \sfr(\tilde\lb)$ if $\lb\ne m\vp_1, m\vp_3$.

Alternatively, one can refer to the seminal work of E.B.~Dynkin on maximal subgroups.
Specifically, in \cite[Theorem\,4.1]{dy52b},
Dynkin describes all irreducible representations of $\sln$ that remain irreducible upon the restriction to
a semisimple subalgebra.
\end{proof}

\begin{rema}
The subalgebra $\mathfrak{sp}_4$ is symmetric in $\mathfrak{sl}_4$, and it is known that
$\vlb^{\mathfrak{sp}_4}\ne 0$ if and only if $\lb=m\vp_2$.
This again shows that $\sfr(m\vp_2)$ is decomposable as $\mathfrak{sp}_4$-module for all 
$m\in\BN$. 
\end{rema}

\end{document}